\newtheorem{theorem}{Theorem}[section]
\newtheorem{lemma}[theorem]{Lemma}
\theoremstyle{definition}
\numberwithin{equation}{section}
\newcommand{\GL}{\mathrm{GL}}
\newcommand{\GU}{\mathrm{GU}}
\newcommand{\SO}{\mathrm{SO}}
\newcommand{\SU}{\mathrm{SU}}
\newcommand{\PSL}{\mathrm{PSL}}
\newcommand{\PSU}{\mathrm{PSU}}
\newcommand{\PSp}{\mathrm{PSp}}
\newcommand{\PGaL}{\mathrm{P\Gamma L}}
\newcommand{\POm}{\mathrm{P \Omega}}
\newcommand{\Sp}{\mathrm{Sp}}
\newcommand{\D}{\mathrm{D}}
\newcommand{\Aut}{\mathrm{Aut}}
\newcommand{\Out}{\mathrm{Out}}
\newcommand{\Inn}{\mathrm{Inn}}
\renewcommand{\S}{\mathrm{S}}
\newcommand{\A}{\mathrm{A}}
\newcommand{\PG}{\mathrm{PG}}
\newcommand{\F}{\mathbb{F}}
\newcommand{\Dmc}{\mathcal{D}}
\newcommand{\Pmc}{\mathcal{P}}
\newcommand{\Bmc}{\mathcal{B}}
\renewcommand{\leq}{\leqslant}
\renewcommand{\geq}{\geqslant}
\renewcommand{\mod}[1]{\ (\mathrm{mod}{\ #1})}
\newcommand{\imod}[1]{\allowbreak\mkern4mu({\operator@font mod}\,\,#1)}
\begin{document}

 \title[]{Symmetric designs and projective special unitary groups of dimension at most five}

\author[A. Daneshkhah]{Asharf Daneshkhah}%
 \address{Asharf Daneshkhah, Department of Mathematics, Faculty of Science, Bu-Ali Sina University, Hamedan, Iran.
 }%
  \email{adanesh@basu.ac.ir}

 \subjclass[]{05B05; 05B25; 20B25}%
 \keywords{Symmetric design; flag-transitive; point-primitive; automorphism group}
 \date{\today}%

\begin{abstract}
In this article, we study symmetric $(v, k, \lambda)$ designs admitting a flag-transitive and point-primitive automorphism group $G$ whose socle is a projective special unitary group of dimension at most five. We, in particular, determine all such possible parameters $(v, k, \lambda)$ and show that there exist eight non-isomorphic of such designs for which $\lambda\in\{3,6,12, 16, 18\}$ and $G$ is $\PSU_{3}(3)$, $\PSU_{3}(3):2$, $\PSU_{4}(2)$ or $\PSU_{4}(2):2$.
\end{abstract}

\maketitle

\section{Introduction}\label{sec:intro}

A \emph{symmetric $(v,k,\lambda)$ design} is an incidence structure $\Dmc=(\Pmc,\Bmc)$  consisting of a set $\Pmc$ of $v$ \emph{points} and a set $\Bmc$ of $v$ \emph{blocks} such that every point is incident with exactly $k$ blocks, and every pair of blocks is incident with exactly $\lambda$ points. A \emph{nontrivial} symmetric design is one in which $2<k<v-1$.
A \emph{flag} of $\Dmc$ is an incident pair $(\alpha,B)$ where $\alpha$ and $B$ are a point and a block of $\Dmc$, respectively. An \emph{automorphism} of a symmetric design $\Dmc$ is a permutation of the points permuting the blocks and preserving the incidence relation. An automorphism group $G$ of $\Dmc$ is called \emph{flag-transitive} if it is transitive on the set of flags of $\Dmc$. If $G$ is primitive on the point set $\Pmc$, then $G$ is said to be \emph{point-primitive}. 
We here adopt the standard notation for finite simple groups of Lie type, for example, we use $\PSL_{n}(q)$, $\PSp_{n}(q)$, $\PSU_{n}(q)$, $\POm_{2n+1}(q)$ and $\POm_{2n}^{\pm}(q)$ to denote the finite classical simple groups. Symmetric and alternating groups on $n$ letters are denoted by $S_{n}$ and $A_{n}$, respectively.  We denote by $n$ the cyclic group of order $n$, and we write $E_{n}$ for an elementary abelian group of order $n$. A group $G$ is said to be \emph{almost simple} with socle $X$ if $X\unlhd G\leq \Aut(X)$, where $X$ is a nonabelian simple group. Further notation and definitions in both design theory and group theory are standard and can be found, for example in \cite{b:beth1999design,b:Atlas,b:Dixon,b:Hugh-design}. 

The main aim of this paper is to study flag-transitive symmetric designs. In \cite{a:Praeger-imprimitive}, Praeger and Zhou study point-imprimitive symmetric $(v,k,\lambda)$ designs and give a classification of such designs in terms of their parameters. In the case where, a symmetric design admits a flag-transitive and point-primitive automorphism group, for small $\lambda$ (i.e. $\lambda\leq 100$), the only type of primitive groups might occur is almost simple or affine \cite{a:reg-reduction,a:Zhou-lam100}. Although, it is still unknown for larger $\lambda$ such an automorphism group is of these two types, it is somehow interesting to study such designs whose automorphism group $G$ is an almost simple group with socle $X$. This paper is part of contribution in classification of symmetric designs admitting flag-transitive and point-primitive finite almost simple automorphism groups of Lie type of small dimension, see \cite{a:ABD-PSL2, a:ABD-PSL3,a:ABD-Exp,a:ABDZ-U4,a:D-PSU3}. In this paper, we continue this project to obtain symmetric designs admitting an automorphism group whose socle is a projective special unitary group of dimension at most five.

\begin{theorem}\label{thm:main}
 Let $\Dmc$ be a nontrivial symmetric $(v, k, \lambda)$ design and let $\alpha$ be a point of $\Dmc$. If $G$ is a flag-transitive and point-primitive automorphism group of $\Dmc$ of almost simple type with socle $X$ is a projective special unitary groups of dimension at most five over a finite field of size $q$, then $\lambda\in\{3,6,12, 16, 18\}$ and $v$, $k$, $\lambda$, $X$, $G_{\alpha}\cap X$ and $G$ are as in one of the lines in {\rm Table~\ref{tbl:main}}.
\end{theorem}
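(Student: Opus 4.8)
The plan is to combine the standard arithmetic constraints on symmetric designs with a case analysis over the maximal subgroups of the almost simple groups under consideration. The overarching strategy exploits two facts. First, since $G$ is point-primitive, the point stabiliser $G_\alpha$ is a maximal subgroup of $G$, and $v=|G:G_\alpha|$. Second, flag-transitivity forces $k$ to divide the orbit length of $G_\alpha$ on blocks containing $\alpha$; more usefully, flag-transitivity combined with the identity $\lambda(v-1)=k(k-1)$ for symmetric designs yields that $k \mid |G_\alpha|$ and, crucially, the \emph{flag-transitivity divisibility condition} $k \mid \lambda d$ for every nontrivial subdegree $d$ of $G$ acting on points. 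This last observation (a classical lemma, essentially due to Davies and used systematically in the cited work such as \cite{a:ABDZ-U4,a:D-PSU3}) gives $k \mid \lambda\gcd(v-1,\text{subdegrees})$, which is the single most powerful tool for eliminating parameters.

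**First I would** fix $X=\PSU_n(q)$ with $n\in\{3,4,5\}$ and enumerate the maximal subgroups $M=G_\alpha\cap X$ using the known classifications (Aschbacher classes plus the tables of maximal subgroups for low-dimensional unitary groups; these are tabulated in the references and in the associated literature on $\PSU_3$, $\PSU_4$, $\PSU_5$). For each candidate $M$ I would compute $v=|X:M|$ exactly as a polynomial in $q$. **Next I would** impose the basic numerical necessary conditions: from $\lambda(v-1)=k(k-1)$ one solves
\begin{equation*}
k=\frac{1+\sqrt{1+4\lambda(v-1)}}{2},
\end{equation*}
so $1+4\lambda(v-1)$ must be a perfect square, and $k-1=\lambda(v-1)/k$ must be an integer. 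Combined with the Fisher-type bound $\lambda<k<v-1$ and the flag-transitive requirement $k \mid |G_\alpha|$, these constraints already discard the overwhelming majority of $(v,M)$ pairs, since $v$ grows like a fixed power of $q$ while the arithmetic conditions are extremely restrictive.

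**The key step**, and the one that does the real work, is the divisibility condition coming from flag-transitivity: writing $r=k$ (as $r=k$ for symmetric designs), one has $k \mid \lambda d_i$ for each subdegree $d_i$ of $G$ on $\Pmc$. I would therefore need the subdegrees, or at least $\gcd$ information about them, for each primitive action $X \le G \le \Aut(X)$ on the coset space of $M$; for the geometric actions (points, lines, parabolic cosets) these are computable via the permutation-character/rank information available in the literature. Bounding $k \le \lambda \gcd(v-1,\{d_i\})$ and comparing with $k\approx\sqrt{\lambda v}$ forces $\lambda$ to be small and pins down $q$ to finitely many values. **The main obstacle I anticipate** is precisely the control of these subdegrees (equivalently, the suborbit structure) for the larger groups $\PSU_5(q)$ and $\PSU_4(q)$ over general $q$: when $M$ is a non-parabolic maximal subgroup the subdegrees are not always in the literature in closed form, so I expect to fall back on coarser divisibility (e.g. using that $k \mid |M|$ and that $p$-parts and cyclotomic factors of $v-1$ must divide $\lambda k$) together with the bound $k(k-1)=\lambda(v-1)$ to rule out all but finitely many $q$, and then to settle the residual small cases $q\in\{2,3\}$ (giving the exceptional groups $\PSU_3(3)$ and $\PSU_4(2)$ in the statement) by direct computation in \GAP{}, confirming the eight designs with $\lambda\in\{3,6,12,16,18\}$ and reconstructing each design explicitly from the orbit of $G_\alpha$ on blocks.
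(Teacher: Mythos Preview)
Your proposal is correct and follows essentially the same route as the paper: reduce $n\in\{3,4\}$ to the cited earlier work, list the maximal subgroups of $\PSU_5(q)$, and for each candidate $H_0$ combine $k\mid 2a\gcd(5,q+1)|H_0|$, the identity $k(k-1)=\lambda(v-1)$, the inequality $\lambda v<k^2$, and the subdegree condition $k\mid\lambda d$ (with explicit subdegrees for the parabolic and the non-degenerate subspace stabilisers) to force finitely many $q$, which are then finished by direct computation. The one extra device the paper employs that you did not name is, in the $^{\hat{}}\GU_4(q)$ case, the observation that since $H$ has a quasi-simple normal subgroup $N\cong\PSU_4(q)$, flag-transitivity forces $|N:M|\mid k$ for some maximal $M<N$, which gives $q^3\mid k$ or $q^3+1\mid k$ and closes that case cleanly.
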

\begin{table}
    \centering
    \scriptsize
    \caption{Parameters in Theorem~\ref{thm:main}}\label{tbl:main}
    \begin{tabular}{p{.4cm}llllllp{3.5cm}l}
        \hline\noalign{\smallskip}
        Line &
        \multicolumn{1}{c}{$v$} &
        \multicolumn{1}{c}{$k$} &
        \multicolumn{1}{c}{$\lambda$} &
        \multicolumn{1}{c}{$X$} &
        \multicolumn{1}{c}{$H$}  &
        \multicolumn{1}{c}{$G$}  &
        \multicolumn{1}{c}{Designs} &
        \multicolumn{1}{c}{References$^{\ast}$} \\
        \noalign{\smallskip}\hline\noalign{\smallskip}
        $1$ & $36$ & $21$ & $12$ & $\PSU_{3}(3)$  & $\PSL_2(7)$  & $\PSU_{3}(3)$
        & Menon
        &\cite{a:Braic-2500-nopower} \\
        $2$ & $36$ & $21$ & $12$ & $\PSU_{3}(3)$  & $\PSL_2(7):2$  & $\PSU_{3}(3):2$
        & Menon
        &\cite{a:Braic-2500-nopower} \\
        $3$ & $36$ & $15$ & $6$ & $\PSU_{4}(2)$  & $S_{6}$  & $\PSU_{4}(2)$
        & Menon
        &\cite{a:ABDZ-U4} \\
        $4$ & $36$ & $15$ & $6$ & $\PSU_{4}(2)$  & $\S_{6}:2$  & $\PSU_{4}(2):2$
        & Menon
        &\cite{a:ABDZ-U4} \\
        $5$ & $40$ & $27$ & $18$ & $\PSU_{4}(2)$  & $3_{+}^{1+2}:2A_{4}$  & $\PSU_{4}(2)$
        &Complement of $\PG_{3}(3)$
        &\cite{a:ABDZ-U4} \\
        $6$ & $40$ & $27$ & $18$ & $\PSU_{4}(2)$  & $3_{+}^{1+2}:2A_{4}:2$  & $\PSU_{4}(2):2$
        &Complement of $\PG_{3}(3)$
        &\cite{a:ABDZ-U4} \\
        $7$ & $40$ & $27$ & $18$ & $\PSU_{4}(2)$  & $3^3:S_{4}$  & $\PSU_{4}(2)$
        & Complement of Higman design
        &\cite{a:ABDZ-U4} \\
        $8$ & $40$ & $27$ & $18$ & $\PSU_{4}(2)$  & $3^3:S_{4}:2$  & $\PSU_{4}(2):2$
        & Complement of Higman design
        &\cite{a:ABDZ-U4} \\
        $9$ & $45$ & $12$ & $3$ & $\PSU_{4}(2)$  & $2^{.}(A_{4}\times A_{4}).2$ & $\PSU_{4}(2)$
        &-
        &\cite{a:ABDZ-U4, a:Praeger-45-12-3}\\
        $10$ & $45$ & $12$ & $3$ & $\PSU_{4}(2)$  & $2^{.}(A_{4}\times A_{4}).2:2$ & $\PSU_{4}(2):2$
        &-
        &\cite{a:ABDZ-U4, a:Praeger-45-12-3}\\
        $11$ & $63$ & $32$ & $16$ & $\PSU_{3}(3)$  & $4.\S_4$  & $\PSU_{3}(3)$
        &-
        &\cite{a:Braic-2500-nopower} \\
        $12$ & $63$ & $32$ & $16$ & $\PSU_{3}(3)$  & $2_{+}^{1+4}.\S_3$  & $\PSU_{3}(3):2$
        &-
        &\cite{a:Braic-2500-nopower} \\
        $13$ & $63$ & $32$ & $16$ & $\PSU_{3}(3)$  & $4^2:\S_3$  & $\PSU_{3}(3)$
        &-
        &\cite{a:Braic-2500-nopower} \\
        $14$ & $63$ & $32$ & $16$ & $\PSU_{3}(3)$  & $4^2:\D_{12}$  & $\PSU_{3}(3):2$
        &-
        &\cite{a:Braic-2500-nopower} \\
        \noalign{\smallskip}\hline
        \multicolumn{9}{p{12cm}}{\tiny $\ast$ The last column addresses to references in which a design with the parameters in the line has been constructed.}
    \end{tabular}
\end{table}

In Section~\ref{sec:examples}, we give  detailed information about the designs obtained in Theorem~\ref{thm:main} in particular those appear in Table \ref{tbl:main}. We also use the software \textsf{GAP} \cite{GAP4} for computational arguments.

\section{Examples and comments}\label{sec:examples}

In this section, we provide some examples of symmetric designs admitting flag-transitive and point-primitive automorphism groups. We, in particular, make some  comments on Theorem~\ref{thm:main} and the designs mentioned in Table~\ref{tbl:main}. In what follows, suppose that $V$ is a vector space of dimension $n$ over a finite field $\F_{q}$ of size $q$.

A well-known example of flag-transitive symmetric designs is a projective space $\PG(n-1,q)$ which is an incidence structure whose points are $1$-dimensional subspaces of $V$ and the lines are $(n-1)$-dimensional of $V$, and the incidence is given by inclusion. The full automorphism group of this design is the projective semilinear group $\PGaL(n,q)$ with socle $\PSL(n, q)$. The unique symmetric $(7, 3, 1)$ design known also as \emph{Fano Plane} is in indeed $\PG(2, 2)$ admitting flag-transitive and point-primitive automorphism group $\PSL(3,2)\cong \PSL(2,7)$. The symmetric $(7,4,2)$ design is the complement of Fano plane which is also flag-transitive and point-primitive , see \cite[Section 1.2.1]{a:reg-reduction}. These examples appear in Theorem~\ref{thm:main}(a)-(b) and line 1 of  Table~\ref{tbl:main}.

The design in lines 2 of Table \ref{tbl:main} is the unique symmetric $(11,5, 2)$ design as a \emph{Paley difference set} which is in fact a Hadamard design, and its full automorphism group is $\PSL(2,11)$ acting  flag-transitively and point-primitively. In this case, the point-stabiliser is isomorphic to $\A_{5}$, see \cite[Section 1.2.1]{a:reg-reduction}. The complement of this design, which is the one in line 3 of Table~\ref{tbl:main}, is the unique symmetric $(11,6,3)$ design whose full automorphism group $\PSL(2,11)$ is also flag-transitive and point-primitive with $\A_{5}$ as its point-stabiliser, see also \cite[Theorem 1.2]{a:Zhou-lam3-classical}.

The unique symmetric $(15,8,4)$ design $\Dmc$ in line 4 of Table~\ref{tbl:main} can be  constructed by points and complements of hyperplanes of $\PG(3,2)$. The full automorphism group of $\Dmc$ is $\PSL(4,2)\cong \A_{8}$ which admits a proper subgroup $\PSL(2,9)\cong \A_{6}$ as an automorphism group of $\Dmc$. Note that $\PSL(2,9)$ acts flag-transitively on $\Dmc$ with  point-stabiliser $\S_{4}$, but not on its complement $\Dmc^{\ast}$. In fact, it has two orbits on the set of flags of $\Dmc^{\ast}$ of size $15$ and $15\cdot 6$, respectively, namely, the stabiliser of a point $\alpha$ has two orbits of size $1$ and $6$ respectively on the seven planes containing $\alpha$, see \cite[Theorem 1.2]{a:Zhou-lam3-classical}.

The symmetric designs with parameters $(35, 18, 9)$ in lines 5-6 in Table~\ref{tbl:main}  are the complement of the symmetric $(35, 17, 8)$ designs which are Hadamard designs with parameters $(4n-1, 2n-1, n-1)$ for $n=9$. These designs arise from studying rank three permutation groups, see~\cite{a:Braic-2500-nopower,b:Atlas,a:rank3}.

The designs in lines 7-11 in Table~\ref{tbl:main} also arise from studying rank three permutation groups. The complement of the symmetric $(56,11,2)$ design is the design with parameters $(56,45,18)$ admitting a rank three automorphism group $\PSL(3, 4)$ as its socle, see~\cite{a:Braic-2500-nopower} and \cite[Theorem]{a:rank3}.

\section{Preliminaries}\label{sec:pre}

In this section, we state some useful facts in both design theory and group theory. Recall that a group $G$ is called almost simple if $X\unlhd G\leq \Aut(X)$, where $X$ is a nonabelian simple group. If $H$ is a maximal subgroup not containing the socle $X$ of an almost simple group $G$, then $G=HX$, and since we may identify $X$ with $\Inn(X)$, the group of inner automorphisms of $X$, we also conclude that $|H|$ divides $|\Out(X)|\cdot |X\cap H|$. This implies the following elementary and useful fact:

\begin{lemma}\label{lem:New}{\rm \cite[Lemma 2.2]{a:ABD-PSL3}}
Let $G$  be an almost simple group with socle $X$, and let $H$ be maximal in $G$ not containing $X$. Then $G=HX$ and $|H|$ divides $|\Out(X)|{\cdot}|X\cap H|$.
\end{lemma}

\begin{lemma}\label{lem:Tits}
Suppose that $\Dmc$ is a symmetric $(v,k,\lambda)$ design admitting a flag-transitive and point-primitive almost simple automorphism group $G$ with socle $X$ of Lie type in odd characteristic $p$. Suppose also that the point-stabiliser $G_{\alpha}$, not containing $X$, is not a parabolic subgroup of $G$. Then $\gcd(p,v-1)=1$.
\end{lemma}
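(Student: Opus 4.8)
The plan is to deduce the conclusion from Tits' Lemma on subgroups of groups of Lie type having index prime to the defining characteristic. Recall its standard form: if $Y$ is a finite group of Lie type in characteristic $p$ and $M$ is a proper subgroup with $p\nmid|Y:M|$, then $M$ is contained in a proper parabolic subgroup of $Y$. I intend to apply this not to the simple group $X$ but to the almost simple group $G$ itself, adopting the usual convention that the parabolic subgroups of $G$ are exactly those $P\leq G$ for which $P\cap X$ is a parabolic subgroup of $X$; the point action then supplies the relevant index, namely $v$.

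First I would record that index. Since $G$ is point-primitive, the point-stabiliser $G_{\alpha}$ is maximal in $G$; and since $G_{\alpha}$ does not contain $X$, maximality forces $G=XG_{\alpha}$, so that
$$v=|G:G_{\alpha}|=|X:X\cap G_{\alpha}|.$$
Thus controlling $\gcd(p,v-1)$ reduces to controlling the $p$-part of $|G:G_{\alpha}|$. This is precisely where the two standing hypotheses on $G_{\alpha}$ (maximality via primitivity, and $X\not\leq G_{\alpha}$) are consumed.

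Next I would argue by contradiction. Suppose $p\mid v-1$. Since $\gcd(v,v-1)=1$, this forces $p\nmid v=|G:G_{\alpha}|$. By Tits' Lemma, $G_{\alpha}$ is then contained in some proper parabolic subgroup $P$ of $G$. But a proper parabolic of $G$ cannot contain the socle $X$, so $P< G$; and as $G_{\alpha}$ is maximal in $G$, the chain $G_{\alpha}\leq P< G$ collapses to $G_{\alpha}=P$. Hence $G_{\alpha}$ is a parabolic subgroup of $G$, contradicting the hypothesis. Therefore $p\nmid v-1$, that is $\gcd(p,v-1)=1$. Equivalently, the same argument shows directly that $p\mid v$, which already yields the claim since then $v-1\equiv-1\imod{p}$.

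The main obstacle is invoking Tits' Lemma correctly in the almost simple setting rather than for the simple group alone, together with the identification of the parabolics of $G$ with those of $X$ needed to interpret the hypothesis ``$G_{\alpha}$ is not parabolic'' at the level of $G$. Here the assumptions that $X$ is a unitary group of dimension at most five and that $p$ is odd are convenient, as they keep us away from the low-rank, small-characteristic exceptional configurations in which the naive form of the lemma requires extra care. A secondary point to check carefully is the reduction $v=|X:X\cap G_{\alpha}|$, i.e. that $G=XG_{\alpha}$, which is exactly where maximality of $G_{\alpha}$ and the assumption $X\not\leq G_{\alpha}$ are used; note that the symmetric-design and flag-transitivity hypotheses, while part of the ambient setting, are not themselves needed for this particular implication.
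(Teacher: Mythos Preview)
Your proof is correct and follows essentially the same route as the paper: both invoke Tits' Lemma together with the maximality of $G_{\alpha}$ to conclude that $p\mid v=|G:G_{\alpha}|$, whence $\gcd(p,v-1)=1$. The paper's argument is the one-line direct form of what you phrased contrapositively; your extra reduction $v=|X:X\cap G_{\alpha}|$ is correct but not actually needed for the conclusion.
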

\begin{proof}
Note that $G_{\alpha}$ is maximal in $G$, then by Tits' Lemma \cite[1.6]{a:tits}, $p$ divides $|G:G_{\alpha}|=v$, and so  $\gcd(p,v-1)=1$.
\end{proof}

\begin{lemma}\label{lem:subdeg}{\rm \cite[3.9]{a:LSS1987}}
If $X$ is a group of Lie type in characteristic $p$, acting on the set
of cosets of a maximal parabolic subgroup, and $X$ is not $\PSL_n(q)$, $\POm_{2m}^{+}(q)$
(with $m$ odd) and $E_{6}(q)$, then there is a unique subdegree which is a power of $p$.
\end{lemma}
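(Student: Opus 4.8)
The plan is to realize the subdegrees through the Bruhat decomposition and reduce the arithmetic question to a combinatorial one on the Weyl group. Write $X$ as a group of Lie type with Weyl group $W$ and Borel subgroup $B$, and let $P=P_J$ be the maximal parabolic determined by a subset $J$ of the simple reflections, so that the chosen Dynkin node is the one removed. The subdegrees of $X$ on $X/P$ are the lengths of the orbits of $P$ on $X/P$, and by the Bruhat decomposition these orbits are in bijection with the double cosets $W_J \backslash W / W_J$. If $d$ is the distinguished (minimal-length) representative of such a double coset, the corresponding subdegree equals $|P_J d P_J|/|P_J|$, which is the relative Poincaré polynomial $\sum_{w} q^{\ell(w)}$ evaluated at $q$, the sum running over the minimal-length coset representatives $w$ attached to $W_J d W_J$.

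First I would translate ``is a power of $p$'' into a combinatorial condition. Since $q$ is a power of $p$ and each subdegree is a sum of distinct powers of $q$, reducing modulo $q-1$ shows that such a sum can be a power of $p$ only if it consists of a single term $q^{\ell(d)}$; the trivial double coset $d=1$ gives the subdegree $1=p^0$, which I set aside as the fixed point. Thus a nontrivial subdegree is a power of $p$ exactly when the $P$-orbit is a single Schubert cell, that is, when $\ell$ is constant on the minimal representatives of $W_J d W_J$. A short index computation shows this happens precisely when $d$ normalizes $W_J$, which in turn is governed by the opposition involution $\tau=-w_{0}$: the coset of the longest element $w_{0}$ contributes a pure $q$-power exactly when $w_{0}W_J w_{0}=W_J$, i.e. when $J$ is $\tau$-stable.

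The core of the argument is then a type-by-type analysis of when more than one such distinguished double coset occurs. For a maximal parabolic, $W_J$ is a maximal parabolic subgroup of $W$, and the candidates producing $p$-power subdegrees are controlled by the symmetries of the Dynkin diagram fixing the removed node. I would verify, running over the simple types and over the choice of removed node, that exactly one nontrivial $p$-power subdegree arises, unless a nontrivial diagram automorphism pairs the removed node with a distinct node, in which case the two associated families of geometric objects (for instance points and hyperplanes, or the two spin nodes) yield two distinct $p$-power subdegrees and uniqueness fails. These coincidences occur precisely for type $A_{n}$ (whence $\PSL_{n}(q)$), for $D_{2m}$ with $m$ odd (whence $\POm_{2m}^{+}(q)$), and for $E_{6}(q)$, which are exactly the excluded cases; in all remaining types the relevant node is fixed by every diagram symmetry and the $p$-power subdegree is unique.

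The main obstacle is this uniform-but-type-sensitive verification: one must control, for every maximal parabolic in every Lie type, which distinguished double cosets collapse to a single Schubert cell, and confirm that the only mechanism producing a second such cell is a diagram automorphism fixing $J$ setwise while moving the removed node. Making precise the link between an ``extra $p$-power subdegree'' and a nontrivial diagram symmetry of the pointed diagram, and then discharging the small-rank and twisted cases by hand, where Poincaré-polynomial factorizations and the known subdegree tables settle the finitely many exceptions, is the delicate part, after which the statement follows.
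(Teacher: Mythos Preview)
The paper does not prove this lemma at all: it is quoted verbatim from \cite[3.9]{a:LSS1987} and used as a black box, so there is no ``paper's proof'' to compare against. What you have written is a sketch of how one would prove the cited result, and the overall strategy---Bruhat decomposition, identification of $P_J$-orbits with $W_J\backslash W/W_J$, and the observation that a subdegree is a $p$-power exactly when the corresponding double coset collapses to a single Schubert cell---is correct and is essentially the route Liebeck--Saxl--Seitz take.

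Two points deserve tightening. First, the claim that each subdegree is ``a sum of \emph{distinct} powers of $q$'' is not needed and not obviously true; the clean argument is that the unique minimal representative $d$ of the double coset contributes $q^{\ell(d)}$ while every other cell contributes $q^{\ell(d)+1}$ times a non-negative integer, so the subdegree is $q^{\ell(d)}(1+qN)$ with $1+qN$ coprime to $p$, forcing $N=0$. Second, your parenthetical ``$D_{2m}$ with $m$ odd (whence $\POm_{2m}^{+}(q)$)'' mislabels the type: $\POm_{2m}^{+}(q)$ has Weyl group of type $D_{m}$, and the opposition involution $-w_{0}$ is nontrivial on $D_{m}$ precisely when $m$ is odd, matching the exclusion in the statement.

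Finally, your last paragraph correctly identifies the real content: showing that the \emph{only} nontrivial $d\in W$ normalising $W_J$ (equivalently, the only nontrivial element of $N_W(W_J)/W_J$) is the one coming from $w_0$, except in the listed types. This is a genuine case analysis on $(W,J)$, including the twisted groups, and you have described it rather than carried it out; that is where the work in \cite{a:LSS1987} lies.
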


\begin{lemma}\label{lem:subdeg}
    Let $G$ be an almost simple group with socle $X=\PSU_{5}(q)$, and let $H$ be a maximal subgroup of $G$ with $H\cap X$ being as in the second column of \rm{Table}~\ref{tbl:subdeg}. Then the action
    of $G$ on the cosets of $H$ has subdegrees dividing the numbers $d$ listed in the last column of \rm{Table}~\ref{tbl:subdeg}.
\end{lemma}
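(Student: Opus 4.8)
The plan is to establish the subdegree bounds by analyzing the permutation action of $X = \PSU_5(q)$ on the cosets of each maximal subgroup $H \cap X$ listed in the table, reducing subdegrees to orbit lengths of a point-stabilizer on the point set. Recall that if $G$ acts on the cosets of $H$, the subdegrees are precisely the lengths of the $H$-orbits on the coset space $G/H$, equivalently the orbits of $H \cap X$ on the conjugates of $H$ (or on the associated geometric objects such as subspaces or substructures that $H$ stabilizes). First I would fix, for each row of Table~\ref{tbl:subdeg}, an explicit geometric or group-theoretic interpretation of the maximal subgroup: for parabolic subgroups this means the stabilizer of a totally isotropic subspace of the natural unitary module, while for the non-parabolic maximal subgroups this means the stabilizer of a nondegenerate subspace, an imprimitivity decomposition, or a subfield/extension-field structure, following the Aschbacher class to which $H$ belongs.

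The core of the argument is then to bound the $H \cap X$-orbit lengths on $G/H$ by counting incidences between the relevant geometric objects. For each class, I would pick a convenient pair of objects (the one stabilized by $H$ and an image under a double-coset representative) and compute the number of objects in a given relative position, since each such count is an $H \cap X$-orbit length and hence a subdegree. The bound $d$ in the last column should arise as the largest such orbit length, or as a common multiple that all subdegrees divide; I would verify that every subdegree divides the stated $d$ by checking that the stabilizer in $H \cap X$ of the second object always contains a subgroup of the requisite index. Where the subgroup $H$ is parabolic, Lemma~\ref{lem:subdeg} (the Liebeck--Saxl--Seitz result on subdegrees that are powers of $p$) gives immediate control on at least one subdegree, and the remaining subdegrees can be extracted from the standard Bruhat-type decomposition of the action on isotropic subspaces.

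In practice I expect that several of the table entries will be most efficiently confirmed by direct computation using \GAP{}, particularly for small values of $q$ where the groups are of manageable order, and by the generic formulas for orbit sizes of classical groups acting on subspaces (these are ratios of orders of unitary groups and Gaussian-type coefficients adjusted for the Hermitian form) for the generic case. The strategy is therefore to treat the bounded family of small $q$ computationally and to derive closed-form orbit-length expressions for the infinite family, then take the maximum to obtain $d$.

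The hard part will be handling the non-parabolic maximal subgroups uniformly, since there the point-stabilizer is not governed by a single Tits-style divisibility relation and the orbit structure depends delicately on how the fixed substructure meets its conjugates; controlling the intersection pattern of two nondegenerate (or two imprimitively-decomposed) configurations in the Hermitian geometry, and ensuring the resulting orbit lengths genuinely divide the claimed $d$ rather than merely being bounded by it, is the step most likely to require careful case analysis. I would isolate this by treating each Aschbacher class separately and invoking Lemma~\ref{lem:New} to constrain $|H|$ in terms of $|X \cap H|$ and $|\Out(X)|$ whenever the almost simple overgroup $G$ strictly contains $X$, so that the subdegrees for $G$ are recovered from those for $X$ by a bounded multiplicative factor.
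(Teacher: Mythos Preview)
Your core geometric idea---interpret $H$ as the stabiliser of a nondegenerate subspace and read off subdegrees as orbit lengths on subspaces---is exactly what the paper does, but you have misread both the scope and the goal of the lemma, and this turns a four-line argument into an open-ended programme.

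First, the scope: Table~\ref{tbl:subdeg} has only \emph{two} rows, and both are $\mathcal{C}_1$-subgroups stabilising a nondegenerate subspace (of dimension $1$ and $2$ respectively). There are no parabolic, imprimitive, subfield or extension-field cases here, so the discussion of Bruhat decompositions, the Liebeck--Saxl--Seitz $p$-power subdegree, Aschbacher classes in general, and \GAP\ verification for small $q$ is all beside the point.

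Second, and more importantly, the goal: the statement only asserts that the action \emph{has} a subdegree dividing $d$, i.e.\ that some nontrivial suborbit has length dividing $d$; this is all that is needed downstream via Lemma~\ref{lem:six}(e). You propose instead to show that \emph{every} subdegree divides $d$, which is a much stronger claim, is not what the paper proves, and is where you anticipate (correctly) that the ``hard part'' would lie. That hard part simply does not arise.

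The paper's proof just exhibits one suborbit in each case. For $H\cap X={}^{\hat{}}\GU_4(q)$ one takes $\alpha=\langle u_1\rangle$ and $\beta=\langle u_2\rangle$ two orthogonal nondegenerate $1$-spaces; then $G_{\alpha\beta}$ contains (the image of) $\GU_1(q)\times\GU_1(q)\times\GU_3(q)$, so $|G_\alpha:G_{\alpha\beta}|$ divides $|\GU_4(q)|/|\GU_1(q)\times\GU_3(q)|=(q+1)(q^4-1)$. For $H\cap X={}^{\hat{}}(\SU_3(q)\times\SU_2(q)){:}(q+1)$ one takes $\alpha=\langle u_1,u_2\rangle$ and $\beta=\langle u_1,u_3\rangle$ two nondegenerate $2$-spaces meeting in a nondegenerate line, and a similar index computation gives a subdegree dividing $(q^2-1)(q^3+1)$. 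No case analysis of all relative positions, no small-$q$ computation, and no passage between $X$ and $G$ via $|\Out(X)|$ is required.
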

\begin{table}
    \small
    \centering
    \caption{Some subdegrees of almost simple groups with socle $\PSU_{5}(q)$.}\label{tbl:subdeg}
    \begin{tabular}{cp{4.5cm}l}
        \hline
        Line &
        \multicolumn{1}{l}{$H\cap X$} &
        \multicolumn{1}{l}{$d$} \\
        \hline
        $1$ & $^{\hat{}}\GU_{4}(q)$  &  $(q+1)(q^{4}-1)$ \\
        $2$ & $^{\hat{}}(\SU_{3}(q)\times \SU_{2}(q)):(q+1)$ &  $(q^2-1)(q^{3}+1)$ \\
        \hline
    \end{tabular}
\end{table}
\begin{proof}
Suppose first  that $H\cap X$ is isomorphic to $^{\hat{}}\GU_{4}(q)$. In this case, $H$ stabilises a pair of non-degenerate subspaces which are mutually orthogonal and span the underlying space $V$. Thus $H=N_{G}(W)$ where $W$ is a $1$-dimensional non-degenerate subspace. Taking $\alpha=\langle u_{1}\rangle$ and $\beta=\langle u_{2}, u_{2} \rangle$, we see that $|G_{\alpha}, G_{\alpha \beta}|$ divides $(q+1)(q^{4}-1)$.
    Suppose now that $H\cap X$ is isomorphic to $\,^{\hat{}}(\SU_{3}(q)\times \SU_{2}(q)):(q+1)$. Again here $H$ stabilises a pair of non-degenerate subspaces, and so $H=N_{G}(W)$ where $W$ is a $2$-dimensional non-degenerate subspace. Set $\alpha=\langle u_{1}, u_2\rangle$ and $\beta=\langle u_{1}, u_{3} \rangle$. Then $|G_{\alpha}, G_{\alpha \beta}|$ divides $(q^2-1)(q^{3}+1)$.
\end{proof}

\begin{lemma}\label{lem:divisible}
Suppose that $\Dmc$ is a a symmetric $(v,k,\lambda)$ design. Let $G$ be a  flag-transitive automorphism group of $\Dmc$ with simple socle $X$ of Lie type in characteristic $p$. If the point-stabiliser $H=G_{\alpha}$ contains a normal quasi-simple subgroup $N$ of Lie type in characteristic $p$ and $p$ does not divide $|Z(K)|$, then $k$ is divisible by $|N{:}M|$, for some maximal subgroup $M$ of $H$.
\end{lemma}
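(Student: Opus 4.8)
The plan is to reduce the divisibility assertion to the single flag-counting identity coming from flag-transitivity, and then to exploit the normality and quasi-simplicity of $N$ inside $H$.

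First I would record the main identity. Since $\Dmc$ is symmetric, the replication number equals the block size, so each point lies on exactly $k$ blocks; flag-transitivity forces $H=G_{\alpha}$ to act transitively on the $k$ blocks incident with $\alpha$, the stabiliser of such a block $B$ being the flag-stabiliser $H_{B}=G_{\alpha B}$. Hence $k=|H:H_{B}|$. This is the only design-theoretic input; everything afterwards is group theory inside $H$.

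Next comes the divisibility core. As $N\unlhd H$ we have $NH_{B}\leq H$ and $|NH_{B}:H_{B}|=|N:N\cap H_{B}|$, so $|N:N\cap H_{B}|$ divides $|H:H_{B}|=k$. Provided $N\cap H_{B}$ is a \emph{proper} subgroup of $N$, it lies in some maximal subgroup $M$ of $N$, and then $|N:M|$ divides $|N:N\cap H_{B}|$, which in turn divides $k$; this is exactly the claim. Thus the entire lemma reduces to showing $N\not\leq H_{B}$.

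The crux is therefore to rule out $N\leq H_{B}$, and this is the step I expect to be the main obstacle. If $N\leq H_{B}$ then, since $N\unlhd H$ and $H$ is transitive on the $k$ blocks through $\alpha$, all $N$-orbits on these blocks have equal length; as $N$ fixes $B$ this length is $1$, so $N$ fixes every block through $\alpha$ setwise. For a point $\beta\neq\alpha$ the $\lambda$ blocks through $\alpha$ and $\beta$ are then each fixed by $N$, so every point of the orbit $\beta^{N}$ lies on all of them; since two distinct blocks meet in exactly $\lambda$ points, $\beta^{N}$ is contained in a set of size at most $\lambda$ that contains $\alpha$, whence $\beta^{N}$ has length at most $\lambda$ (for $\lambda=1$, i.e. a projective plane, one argues separately). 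As $G$, and hence $N$, acts faithfully on the points, a projection argument gives a central quotient $\pi(N)$ of $N$ — still of Lie type in characteristic $p$, and with $|\pi(N)|_{p}=|N|_{p}$ because $p\nmid|Z(N)|$ — that embeds faithfully into a symmetric group $\Sym(\Delta)$ with $|\Delta|\leq\lambda$. Consequently $|N|_{p}$ would divide $\lambda!$, bounding the unipotent part of $N$ by a function of $\lambda$. The remaining work is to contradict this: one plays the bound against the design inequalities $\lambda<k<v=|G:H|$ together with the fact that $|N|$ is large relative to the index $|G:H|$, which in the cases at hand (socle $\PSU_{n}(q)$ with $n\leq 5$, $N$ a unitary or linear section of comparable order, so that $|N|_{p}$ far exceeds the $p$-part of $\lambda!$) is impossible. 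Once $N\not\leq H_{B}$ is secured, the conclusion $|N:M|\mid k$ for a maximal $M\leq N$ follows at once from the divisibility core.
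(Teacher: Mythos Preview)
Your first two paragraphs reproduce the paper's proof exactly: the paper simply records $k=|H:H_{B}|$, observes $|N:N_{B}|\mid k$, and then takes $M$ maximal in $N$ with $N_{B}\leq M$. That is the whole of the paper's argument; it is three sentences long and never addresses why $N_{B}$ is proper in $N$.

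Your third paragraph therefore goes well beyond what the paper does: you correctly identify that one must exclude $N\leq H_{B}$, and your route (normality of $N$ forces all blocks through $\alpha$ to be $N$-fixed, hence every nontrivial $N$-orbit on points has length at most $\lambda$, and quasi-simplicity plus $p\nmid|Z(N)|$ then bounds $|N|_{p}$ by $\lambda!$) is sound. The only soft spot is the final step, where you invoke ``the cases at hand'' rather than a general inequality; but since the paper never touches this issue at all, your proposal is strictly more complete than the published proof. In short: same approach, with a gap the paper leaves open partially filled by you.
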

\begin{proof}
If $B$ is a block incident with a point $\alpha$ of $\Dmc$, then $k= |H{:}H_{B}|$, and so $|N{:}N_{B}|$ divides $k$. Now, let $M$ be a maximal subgroup of $N$ such that $N_{B}\leq M$. Then $|N{:}M|$ must divide $k$, so $k$ is divisible by $|N{:}M|$.
\end{proof}

\begin{lemma}\label{lem:six}{\rm \cite[Lemma 2.1]{a:ABD-PSL2}}
Let $\Dmc$ be a symmetric $(v,k,\lambda)$ design, and let $G$ be a flag-transitive automorphism group of $\Dmc$. If $\alpha$ is a point in $\Pmc$ and $H:=G_{\alpha}$, then
\begin{enumerate}[\rm \quad (a)]
  \item $k(k-1)=\lambda(v-1)$;
  \item $4\lambda(v-1)+1$ is square;
  \item $k\mid |H|$ and $\lambda v<k^2$;
  \item $k\mid \gcd(\lambda(v-1),|H|)$;
  \item $k\mid \lambda d$, for all subdegrees $d$ of $G$.
\end{enumerate}
\end{lemma}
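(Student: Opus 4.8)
The plan is to derive all five items from two inputs: the elementary parameter counts available in any symmetric design, and the fact that flag-transitivity forces $H=G_{\alpha}$ to act transitively on the blocks through $\alpha$. Throughout I use that a symmetric $(v,k,\lambda)$ design is self-dual at the level of parameters, so that each block carries exactly $k$ points and any two distinct points lie on exactly $\lambda$ common blocks (this is standard; see \cite{b:beth1999design,b:Hugh-design}).

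For (a) I would count, in two ways, the incident triples consisting of a point $\gamma$ together with an unordered pair of distinct blocks $\{B_{1},B_{2}\}$ with $\gamma\in B_{1}\cap B_{2}$. Summing first over pairs of blocks gives $\binom{v}{2}\lambda$, since every two blocks meet in $\lambda$ points; summing first over points gives $v\binom{k}{2}$, since each point lies on $k$ blocks. Equating the two expressions yields $(v-1)\lambda=k(k-1)$, which is (a). For (b) I then read (a) as the quadratic $k^{2}-k-\lambda(v-1)=0$ in the unknown $k$ and observe that its positive root is an integer only if the discriminant $1+4\lambda(v-1)$ is a perfect square.

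Parts (c) and (d) use flag-transitivity. If $B$ is a block incident with $\alpha$, then for any other block $B'\ni\alpha$ the flag-transitivity of $G$ produces $g\in G$ with $g\alpha=\alpha$ and $gB=B'$; hence $g\in H$, so $H$ is transitive on the $k$ blocks through $\alpha$ and $k=|H:H_{B}|$ divides $|H|$. The inequality $\lambda v<k^{2}$ follows by rewriting (a) as $k^{2}=\lambda v+(k-\lambda)$ and noting that $\lambda=k(k-1)/(v-1)<k$ because $k<v$ in a nontrivial design, so $k-\lambda>0$. Then (d) is immediate: $k$ divides $\lambda(v-1)=k(k-1)$ trivially and divides $|H|$ by (c), hence divides $\gcd(\lambda(v-1),|H|)$.

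The substantive step is (e), where I would double-count over a suborbit. Let $\Delta$ be an $H$-orbit on $\Pmc$ of length $d$ and consider the pairs $(\beta,B)$ with $\beta\in\Delta$ and $B$ a block incident with both $\alpha$ and $\beta$. Counting over $\beta$ first gives $\lambda d$, since each $\beta$ lies with $\alpha$ on exactly $\lambda$ common blocks. Counting over the blocks $B\ni\alpha$ instead gives $k\cdot|B\cap\Delta|$, and here is the point where flag-transitivity is essential: because $H$ permutes the $k$ blocks through $\alpha$ transitively while stabilising $\Delta$ setwise, the number $|B\cap\Delta|$ is independent of the chosen block $B\ni\alpha$. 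Equating the two counts gives $k\cdot|B\cap\Delta|=\lambda d$, whence $k\mid\lambda d$. The main obstacle I anticipate is precisely this constancy of $|B\cap\Delta|$: it fails under mere point- or block-transitivity and relies on $H=G_{\alpha}$ acting transitively on the blocks through $\alpha$, which is exactly what flag-transitivity supplies.
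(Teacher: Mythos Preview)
The paper does not prove this lemma; it is quoted from \cite[Lemma~2.1]{a:ABD-PSL2} without argument, so there is no proof in the paper to compare against. Your derivation is the standard one and is correct in every part.

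One small caveat on part~(e): your double count uses that each $\beta\in\Delta$ lies on exactly $\lambda$ blocks with $\alpha$, which requires $\beta\neq\alpha$. Since $\Delta$ is an $H$-orbit and $H$ fixes $\alpha$, either $\Delta=\{\alpha\}$ or $\alpha\notin\Delta$, so your argument really proves $k\mid\lambda d$ for the \emph{nontrivial} subdegrees. This is exactly how the lemma is applied throughout the paper (always with nontrivial subdegrees coming from Lemma~\ref{lem:subdeg}), and in fact the assertion would be false for the trivial subdegree $d=1$ since $\lambda<k$; so no harm is done, but it is worth being explicit.
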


If a group $G$ acts primitively on a set $\Pmc$ and $\alpha\in \Pmc$ (with $|\Pmc|\geq 2$), then the point-stabiliser $G_{\alpha}$ is maximal in $G$ \cite[Corollary 1.5A ]{b:Dixon}. Therefore, in our study, we need a list of all maximal subgroups of almost simple group $G$ with socle $X:=\PSU_{5}(q)$. Note that if $H$ is a maximal subgroup of $G$, then $H_{0}:=H\cap X$ is not necessarily maximal in $X$ in which case $H$ is called a \emph{novelty}. By~\cite[Tables 8.20 and 8.21]{b:BHR-Max-Low}, the complete list of maximal subgroups of an almost simple group $G$ with socle $\PSU_{5}(q)$ are known, and in this case, there arise only three novelties.

\begin{lemma}\label{lem:maxes}
Let $G$ be an almost simple group with socle $X=\PSU_{5}(q)$, and let $H$ be a maximal subgroup of $G$ not containing $X$. Then $H_{0}:=H\cap X$ is isomorphic to one of the subgroups listed in {\rm Table~\ref{tbl:maxes}}.
\end{lemma}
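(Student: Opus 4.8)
The plan is to read the statement off directly from the classification of maximal subgroups of the low-dimensional finite classical groups due to Bray, Holt and Roney-Dougal, specialised to the case $n=5$. Since $G$ acts on the natural $5$-dimensional unitary module, Aschbacher's theorem partitions the maximal subgroups of $G$ not containing $X$ into the eight geometric classes $\mathcal{C}_{1},\ldots,\mathcal{C}_{8}$ together with the class $\mathcal{S}$ of almost simple subgroups acting absolutely irreducibly. First I would invoke \cite[Tables 8.20 and 8.21]{b:BHR-Max-Low}, which record exactly these subgroups for $X=\PSU_{5}(q)$, together with the precise conditions on $q$ (and on the arithmetic quantity $d=\gcd(5,q+1)$) under which each candidate is genuinely maximal. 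The lemma is then obtained by transcribing, for each surviving candidate, only the isomorphism type of the intersection $H_{0}=H\cap X$ into Table~\ref{tbl:maxes}.

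In carrying this out I would exploit that $5$ is prime, which makes several geometric classes degenerate. The reducible class $\mathcal{C}_{1}$ supplies the stabilisers of nondegenerate and of totally isotropic subspaces, which are precisely the parabolic and the $\GU_{k}(q)\times\GU_{5-k}(q)$-type subgroups already used in Lemma~\ref{lem:subdeg}; the imprimitive class $\mathcal{C}_{2}$ reduces to the stabiliser of a decomposition of $V$ into five mutually orthogonal nondegenerate one-spaces, since $5$ has no nontrivial proper divisor; the field-extension class $\mathcal{C}_{3}$ is controlled by the prime degrees dividing $5$; and the subfield class $\mathcal{C}_{5}$ contributes the groups $\SU_{5}(q_{0})$ with $q=q_{0}^{r}$. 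The remaining geometric classes and the members of $\mathcal{S}$ are read off verbatim from the tables, recording in each row the constraint on $q$ required for maximality.

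The only genuinely delicate point, and hence the main obstacle, is the correct treatment of the three novelties flagged in the discussion preceding the lemma: subgroups $H$ that are maximal in $G$ even though $H_{0}=H\cap X$ is not maximal in $X$. For these I would consult the explicit maximality conditions in \cite[Tables 8.20 and 8.21]{b:BHR-Max-Low}, which depend on the outer automorphism induced by $G/X$ via the diagonal, field and graph automorphisms, and include the relevant novelty only under the stated hypothesis on $G$. Beyond this, the work is entirely bookkeeping: ensuring that every row of the cited tables is accounted for, that the $\gcd$-conditions involving $d$ are transcribed faithfully, and that no candidate contained in a larger member of the list is mistakenly retained.
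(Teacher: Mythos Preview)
Your proposal is correct and takes essentially the same approach as the paper: the paper's proof consists of a single sentence citing \cite[Tables~8.20 and~8.21]{b:BHR-Max-Low}, from which the list in Table~\ref{tbl:maxes} is read off directly. Your elaboration via Aschbacher's classes and the treatment of novelties is faithful to what underlies those tables, but the paper simply defers all of this to the cited reference without further comment.
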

\begin{proof}
The maximal subgroups $H$ of $G$ can be read off from~\cite[Tables 8.20 and 8.21]{b:BHR-Max-Low}.
\end{proof}
\begin{table}[h]
  \small
  \centering
  \caption{Maxiamal subgroups $H$ of almost simple groups with socle $X=\PSU_{5}(q)$.}\label{tbl:maxes}
\begin{tabular}{cll}
  \noalign{\smallskip}\hline\noalign{\smallskip}
   Line& $H\cap X$ & Comments  \\
  \hline\noalign{\smallskip}
  $1$ & $^{\hat{}}[q]^{1+6}:\SU_{3}(q):(q^2-1)$ &  \\
  $2$ & $^{\hat{}}[q]^{4+4}:\GL_{2}(q^2)$ &  \\
  $3$ & $^{\hat{}}\GU_{4}(q)$ & \\
  $4$ & $^{\hat{}}(\SU_{3}(q)\times \SU_{2}(q)):(q+1)$ &\\
  $5$ & $^{\hat{}}(q+1)^{4}:\S_{5}$ &\\
  $6$ & $^{\hat{}}\frac{q^5+1}{q+1}:5$ & $q\geq 3$\\
  $7$ & $^{\hat{}}\SU_{5}(q_{0})\cdot \gcd(\frac{q+1}{q_{0}+1}, 5)$ & $q=q_{0}^r$, $r$ odd prime  \\
  $8$ & $\SO_{5}(q)$ & $q$ odd \\
  $9$ & $^{\hat{}} 5_{+}^{1+2}:\Sp_{2}(5)$ & $q=p\equiv 4 \mod{5}$ or $q=p^2$ and $p\equiv 2, 3 \mod{5}$ \\
  $10$ & $\PSL_{2}(11)$ & $q=p\equiv 2,6,7,8,10 \mod{11}$ \\
  $11$ & $\PSU_{4}(2)$ & $q=p\equiv 5 \mod{6}$ \\
  \hline\noalign{\smallskip}
\end{tabular}

\end{table}

\section{\bf Proof of the main result}\label{sec:proof}

In this section, we prove Theorem~\ref{thm:main} in a series of lemmas. We first recall from  Subsection~\ref{sec:out} that the assertion for the case where $X=\PSU_{n}(q)$ with $n=3, 4$ can be deduced from \cite{a:D-PSU3,a:ABDZ-U4}. By revisiting \cite[Lemmas ??]{a:D-PSU3}, we obtain the missing designs on lines 1-2 and 11-14 of Table~\ref{tbl:main} with the same parameters.

In what follows, we suppose that $\Dmc$ is a nontrivial symmetric $(v, k, \lambda)$ design and $G$ is an almost simple automorphism group with simple socle $X=\PSU_{5}(q)$, where  $q = p^a$ ($p$ prime), that is to say, $X\lhd G \leq \Aut(X)$. Suppose also that $V=\F_{q}^{5}$ is the underlying vector space of $X$ over the finite field $\F_{q}$ of size $q$. If $G$ is a  point-primitive automorphism group of $\Dmc$, then the point-stabiliser $H = G_{\alpha}$ is maximal in $G$ \cite[Corollary 1.5A]{b:Dixon}. Let $H_{0}= H\cap X$. Then by Lemma~\ref{lem:maxes}, the subgroup $H_{0}$ is isomorphic to one of the subgroups recorded in Table~\ref{tbl:maxes}, and  so Lemma~\ref{lem:New} implies that
\begin{align}
   v=\frac{|X|}{|H_{0}|}=\frac{q^{10}(q^5+1)(q^4-1)(q^3+1)(q^2-1)}{\gcd(5, q+1){\cdot}|H_{0}|}.\label{eq:v}
\end{align}
Note that $|\Out(X)|=2a\cdot \gcd(5,q+1)$. Therefore, by Lemmas~\ref{lem:New}(b) and~\ref{lem:six}(c),
\begin{align}\label{eq:k-out}
  k \mid 2a\cdot \gcd(5, q+1){\cdot}|H_{0}|.
\end{align}

We now run through all possible subgroups $H_{0}$ recorded in Table~\ref{tbl:maxes}, and obtain the only possible cases mentioned in Theorem~\ref{thm:main}.


\begin{lemma}\label{lem:1}
The subgroup $H_{0}$ cannot be isomorphic to $\,^{\hat{}}E_{q}^{1+6}:\SU_{3}(q):(q^{2}-1)$.
\end{lemma}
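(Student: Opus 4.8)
The plan is to use that $H_0={}^{\hat{}}E_{q}^{1+6}{:}\SU_{3}(q){:}(q^{2}-1)$ is the stabiliser in $X$ of an isotropic point of the unitary geometry (a one-dimensional totally isotropic subspace of $V$), hence a maximal parabolic subgroup; consequently $H=G_{\alpha}$ is parabolic and $p\nmid v$. First I would record from \eqref{eq:v} that
\[
v=\frac{|X|}{|H_0|}=(q^5+1)(q^2+1),\qquad v-1=q^2(q^5+q^3+1).
\]
Since $X=\PSU_{5}(q)$ is none of the groups excluded in the subdegree result quoted from \cite{a:LSS1987}, the action on isotropic points admits a subdegree that is a power of $p$, and a direct count identifies it: fixing $\alpha=\langle e\rangle$ with a hyperbolic complement, the isotropic points lying in $\alpha^{\perp}$ number $1+q^2(q^3+1)$, so the isotropic points $\langle f\rangle$ with $\langle e,f\rangle\neq 0$ form a single suborbit of size $v-1-q^2(q^3+1)=q^7$. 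Thus $q^7$ is a subdegree, and Lemma~\ref{lem:six}(e) yields $k\mid \lambda q^7$.

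Next I would combine $k\mid\lambda q^{7}$ with the identity $k(k-1)=\lambda(v-1)$ of Lemma~\ref{lem:six}(a). Writing $\lambda q^{7}=ck$ and eliminating $\lambda$ gives $c(q^5+q^3+1)=(k-1)q^5$; as $\gcd(q^5+q^3+1,q)=1$ this forces $q^5\mid c$, say $c=q^5c'$, whence
\[
k=1+c'(q^5+q^3+1),\qquad \lambda=\frac{c'k}{q^2}.
\]
The bound $\lambda<k$ coming from Lemma~\ref{lem:six}(c) gives $1\leq c'<q^2$, and integrality of $\lambda$ forces $q^2\mid c'(c'+1)$, since $k\equiv 1+c'\pmod{q^2}$. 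Because $c'$ and $c'+1$ are coprime and $q^2=p^{2a}$, the full $p$-power must divide a single factor, which for $1\leq c'<q^2$ is possible only when $c'=q^2-1$. This isolates the unique candidate
\[
k=q^2(q^5-q+1),\qquad \lambda=(q^2-1)(q^5-q+1).
\]

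Finally I would eliminate this candidate using $k\mid 2a\cdot\gcd(5,q+1)\cdot|H_0|$ from \eqref{eq:k-out}, that is, $k\mid 2a\,q^{10}(q^3+1)(q^2-1)^2$. Short residue computations (reduce modulo $q-1$, modulo $q+1$, and modulo $q^3+1$) show that $q^5-q+1$ is coprime to each of $q$, $q^2-1$ and $q^3+1$, so the factor $q^5-q+1$ of $k$ must divide $2a$. As $q^5-q+1>2a$ for every prime power $q=p^a\geq 2$, this is impossible, contradicting Lemma~\ref{lem:six}(c); hence no such design exists and $H_0$ cannot be this subgroup. I expect the main work to be the exact subdegree computation yielding $q^7$ together with the three coprimality checks; once these are in place, the arithmetic elimination is routine.
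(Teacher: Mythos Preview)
Your argument is correct and follows essentially the same route as the paper: both compute $v=(q^5+1)(q^2+1)$, use the $p$-power subdegree to obtain (after combining with $k\mid\lambda(v-1)$) the relation $mk=\lambda q^2$ with $m<q^2$, force $m=q^2-1$ via $q^2\mid m(m+1)$, and then rule out $k=q^2(q^5-q+1)$ using \eqref{eq:k-out}. The only cosmetic differences are that you make the subdegree $q^7$ explicit before reducing to $q^2$, and at the end you carry the coprimality check one step further (showing $\gcd(q^5-q+1,q^3+1)=1$) to get $q^5-q+1\mid 2a$ rather than the paper's $q^5-q+1\mid 2a(q^3+1)$.
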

\begin{proof}

By~\eqref{eq:v}, we have that $v=q^7+q^5+q^2+1$.  It follows from Lemmas~\ref{lem:six}(e) and~\ref{lem:subdeg} that $k$ divides $\lambda q^2$. Let now $m$ be a positive integer such that $mk = \lambda q^2$. Since $\lambda<k$, we have that $m<q^2$. By Lemma~\ref{lem:six}(a), $k(k-1)=\lambda(v-1)$, and so $\lambda q^2(k-1) = m\lambda(q^7+q^5+q^2)$. Thus,
\begin{align}
  k= m(q^5+q^3+1)+1 \quad \text{and} \quad
\lambda =m^2(q^3+q)+\frac{m^2+m}{q^2}.\label{eq:case1-lam}
\end{align}
Since $\lambda$ is integer, (\ref{eq:case1-lam}) implies that $q^2\mid m^2+m$. Recall that $m<q^2$. Therefore, $q^2$ must divide $m+1$, and so $m=q^2-1$. It follows from~\eqref{eq:case1-lam} that $k=(q^2-1)(q^5+q^3+1)+1=q^2(q^5-q+1)$. By~\eqref{eq:k-out}, $k$ divides $2aq^{10}(q^3+1)(q^2-1)^2$. Therefore $q^5-q+1$ must divide $2a(q^3+1)$. Thus $q^5-q+1\leq 2a(q^3+1)$, which is impossible.
\end{proof}
\begin{lemma}\label{lem:2}
The subgroup $H_{0}$ cannot be isomorphic to $^{\hat{}}[q]^{4+4}:\GL_{2}(q^2)$.
\end{lemma}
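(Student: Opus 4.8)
The plan is to mirror the structure of Lemma~\ref{lem:1}, since the two cases involve analogous maximal parabolic subgroups. First I would compute $v$ explicitly from~\eqref{eq:v}. For $H_0 = {}^{\hat{}}[q]^{4+4}:\GL_2(q^2)$, the order of $H_0$ is $q^{8}$ times $|\GL_2(q^2)| = q^2(q^2-1)(q^4-1)$ (up to the scalar factor absorbed by the hat notation), so dividing $|X|$ by $|H_0|$ should yield a polynomial in $q$; I expect something of the shape $v = (q^5+1)(q^3+1)/(q+1)$ or a closely related factorization, giving $v$ as an explicit polynomial in $q$.

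Next I would invoke the subdegree machinery. Since $H_0$ is a maximal parabolic and $X = \PSU_5(q)$ is not among the excluded families in Lemma~\ref{lem:subdeg} (the Lie-type subdegree lemma of~\cite{a:LSS1987}), there is a subdegree that is a power of $p$. By Lemma~\ref{lem:six}(e), $k$ divides $\lambda d$ for every subdegree $d$, so $k \mid \lambda q^{b}$ for the relevant power $q^b$ (likely $q^4$ given the $[q]^{4+4}$ structure). I would then set $mk = \lambda q^{b}$ with $m$ a positive integer, and use $\lambda < k$ to bound $m < q^{b}$. Substituting into the fundamental relation $k(k-1) = \lambda(v-1)$ from Lemma~\ref{lem:six}(a), I would solve for $k$ and $\lambda$ as expressions in $m$ and $q$, exactly as in~\eqref{eq:case1-lam}.

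The decisive step is then an integrality and divisibility squeeze. The formula for $\lambda$ will carry a fractional term whose denominator is a power of $q$; forcing $\lambda \in \Zbb$ pins down $m$ to a small set of residues, and combined with $m < q^{b}$ this should force a single value such as $m = q^b - 1$. Plugging back gives $k$ as an explicit polynomial, and then the crude bound~\eqref{eq:k-out}, namely $k \mid 2a\cdot\gcd(5,q+1)\cdot|H_0|$, produces a divisibility condition of the form ``(large polynomial in $q$) divides $2a\cdot(\text{smaller polynomial})$.'' Since the left side grows faster than the right for all $q$, this is impossible, completing the contradiction.

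The main obstacle I anticipate is getting the exact power of $p$ appearing as the guaranteed subdegree: the $[q]^{4+4}$ notation signals a unipotent radical with two layers, and one must verify which $p$-power subdegree Lemma~\ref{lem:subdeg} actually delivers for this particular parabolic, rather than assuming it is simply $q^4$ or $q^8$. A secondary subtlety is handling the $\gcd(5,q+1)$ factor and the outer-automorphism contribution cleanly so that the final ``grows faster'' inequality is genuinely valid for \emph{all} prime powers $q$, including the smallest cases, which may need to be checked separately or absorbed into the asymptotic estimate.
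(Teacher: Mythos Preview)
Your overall plan is the paper's plan: compute $v$, use the prime-power subdegree to get $k\mid \lambda q^{b}$, parametrise $mk=\lambda q^{b}$, solve $k(k-1)=\lambda(v-1)$ for $k$ and $\lambda$, and finish with an integrality/divisibility squeeze. Two specifics need correcting.

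First, the arithmetic: here $v=(q^{5}+1)(q^{3}+1)=q^{8}+q^{5}+q^{3}+1$ (no division by $q+1$), so $v-1=q^{3}(q^{5}+q^{2}+1)$. Whatever $p$-power subdegree Lemma~\ref{lem:subdeg} hands you, combining $k\mid \lambda q^{c}$ with $k\mid \lambda(v-1)$ gives $k\mid \lambda\gcd(q^{c},v-1)=\lambda q^{3}$; so the effective exponent is $b=3$, not $4$, and your worry about identifying the exact unipotent layer is moot.

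Second, and more important, the integrality step is \emph{not} as clean as in Lemma~\ref{lem:1}. With $mk=\lambda q^{3}$ one gets $k=1+m(q^{5}+q^{2}+1)$ and
\[
\lambda=m^{2}q^{2}+\frac{m^{2}(q^{2}+1)+m}{q^{3}},
\]
so the condition is $q^{3}\mid m\bigl(m(q^{2}+1)+1\bigr)$. Since $\gcd\bigl(m,\,m(q^{2}+1)+1\bigr)=1$ and $m<q^{3}$, this forces $q^{3}\mid m(q^{2}+1)+1$, but that quantity can be as large as about $q^{5}$, so it does \emph{not} force a single value of $m$ in the way $q^{2}\mid m+1$ did in Lemma~\ref{lem:1}. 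The paper therefore introduces a second parameter $n$ via $m(q^{2}+1)+1=nq^{3}$, extracts the further divisibility $q^{2}+1\mid nq+1$, sets $nq+1=s(q^{2}+1)$, and squeezes $s$ using $n\leq q^{2}+1$ and $s\equiv 1\pmod q$. Your proposed endgame of plugging a unique $m$ into~\eqref{eq:k-out} is not what the paper does; the paper never invokes~\eqref{eq:k-out} here, finishing instead purely inside this chain of congruences. Your outline would need this extra layer (or an equivalent device) to go through.
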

\begin{proof}
According to ~\eqref{eq:v}, we have that $v=q^8+q^5+q^3+1$. By Lemmas~\ref{lem:six}(e) and~\ref{lem:subdeg}, $k$ divides $\lambda q^3$. If $m$ is a positive integer such that $mk = \lambda q^3$, then since $\lambda<k$, we have that $m<q^3$. By Lemma~\ref{lem:six}(a), $k(k-1)=\lambda(v-1)$, and so $\lambda q(k-1)=m\lambda(q^5+q^2+1)$. Thus,
\begin{align}
  k= m(q^5+q^2+1)+1 \quad \text{and} \quad
\lambda = m^2q^2+\frac{m^2(q^2+1)+m}{q^3}.\label{eq:case2-lam}
\end{align}
Since $\lambda$ is integer, (\ref{eq:case2-lam}) implies that $q^3$ divides $m^2(q^2+1)+m$. Recall that $m<q^3$. Therefore, $q^3$ must divide $m(q^2+1)+1$. Let $n$ be a positive integer such that $m(q^2+1)+1=nq^{3}$. Note that $m<q^3$. Thus $nq^{3}=m(q^2+1)+1<q^3(q^2+1)+1$, and so $n\leq q^2+1$. Also, we have that
\begin{align*}
  m=\frac{nq^3-1}{q^2+1}=nq-\frac{nq+1}{q^2+1}.
\end{align*}
Since $m$ is integer, $q^2+1$ must divide $nq+1$. Let $s$ be a positive integer that $nq+1=s(q^2+1)$. Note that $n\leq q^2+1$. Therefore $s(q^2+1)=nq+1\leq q(q^2+1)+1$, and so $s\leq q$. As $nq+1=s(q^2+1)$, $q$  must divide $s-1$, where $s\leq q$, which is impossible.
\end{proof}

\begin{lemma}\label{lem:3}
The subgroup $H_{0}$ cannot be isomorphic to $^{\hat{}}\GU_{4}(q)$ .
\end{lemma}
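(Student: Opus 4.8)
The plan is to follow exactly the same strategy that succeeded in Lemmas~\ref{lem:1} and~\ref{lem:2}: compute $v$ from~\eqref{eq:v}, use the subdegree bound to pin $k$ to a short arithmetic progression in $\lambda$, and then derive a contradiction from the divisibility constraint~\eqref{eq:k-out}. First I would substitute $|H_{0}|=|{}^{\hat{}}\GU_{4}(q)|$ into~\eqref{eq:v} to obtain a closed form for $v$; since $|\GU_{4}(q)|=q^{6}(q^{4}-1)(q^{3}+1)(q^{2}-1)(q+1)$, cancellation against $|X|$ should leave $v=q^{4}(q^{5}+1)/(q+1)=q^{4}(q^{4}-q^{3}+q^{2}-q+1)$, so that $v-1$ factors conveniently.

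Next I would invoke Lemma~\ref{lem:subdeg} (line~1 of Table~\ref{tbl:subdeg}), which tells us that for this $H_{0}$ every subdegree divides $d=(q+1)(q^{4}-1)$. By Lemma~\ref{lem:six}(e), $k$ then divides $\lambda(q+1)(q^{4}-1)$. Writing $mk=\lambda(q+1)(q^{4}-1)$ with $m$ a positive integer, the inequality $\lambda<k$ forces a bound $m<(q+1)(q^{4}-1)$. Substituting $k=\lambda(q+1)(q^{4}-1)/m$ into the basic identity $k(k-1)=\lambda(v-1)$ from Lemma~\ref{lem:six}(a) and solving should yield closed expressions
\begin{align*}
k=m\cdot\frac{v-1}{(q+1)(q^{4}-1)}+1 \qquad\text{and}\qquad \lambda=\frac{m^{2}(v-1)/\big((q+1)(q^{4}-1)\big)+m}{(q+1)(q^{4}-1)},
\end{align*}
in direct analogy with~\eqref{eq:case1-lam} and~\eqref{eq:case2-lam}. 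The requirement that $\lambda$ be an integer then becomes a congruence that, together with the range $m<(q+1)(q^{4}-1)$, should determine $m$ (or a very small list of candidates) uniquely, exactly as $m=q^{2}-1$ was forced in Lemma~\ref{lem:1}.

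With $m$ and hence $k$ determined, the final step is to apply~\eqref{eq:k-out}: since $k\mid 2a\gcd(5,q+1)|H_{0}|$, dividing out all factors of $k$ that already appear in $2a\gcd(5,q+1)|H_{0}|$ should isolate a large polynomial factor of $k$ that is forced to divide a much smaller expression of the form $2a\cdot(\text{low-degree polynomial in }q)$. Comparing degrees then gives a strict inequality that fails for all $q$, closing the case. The main obstacle I anticipate is the integrality analysis in the middle step: because $v-1$ here carries the factor $q^{4}(q^{4}-q^{3}+q^{2}-q+1)-1$ rather than the cleaner forms seen in the parabolic cases, extracting the divisibility condition on $m$ may require a short auxiliary computation (for instance reducing modulo $q+1$ and modulo $q^{4}-1$ separately, or introducing a secondary integer $n$ as was done in Lemma~\ref{lem:2}) before the value of $m$ crystallises. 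Once $m$ is known the concluding degree comparison is routine.
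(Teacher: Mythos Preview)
Your plan correctly recovers $v=q^{4}(q^{4}-q^{3}+q^{2}-q+1)$, and after a step you do not mention --- since $\gcd(q+1,q^{5}+q+1)=1$ the quotient $(v-1)/[(q+1)(q^{4}-1)]$ is \emph{not} an integer, so integrality of $k$ already forces $(q+1)^{2}\mid m$; writing $m=(q+1)^{2}m'$ collapses your setup to $m'k=\lambda(q^{2}+1)(q-1)$, $k=1+m'(q^{5}+q+1)$, $m'<(q^{2}+1)(q-1)$ --- you arrive exactly where the paper's argument begins. The genuine gap is the step you yourself flag as the main obstacle: here the integrality of $\lambda$ gives only
\[
(q^{2}+1)(q-1)\ \big|\ m'\bigl(m'(2q+1)+1\bigr),
\]
and because the modulus $(q^{2}+1)(q-1)$ is \emph{not} a prime power (in contrast to the $q^{2}$ and $q^{3}$ appearing in Lemmas~\ref{lem:1} and~\ref{lem:2}), this congruence admits on the order of $2^{\omega((q^{2}+1)(q-1))}$ solutions in the admissible range, one for each coprime splitting of the modulus between the two coprime factors $m'$ and $m'(2q+1)+1$. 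No single value of $m'$ crystallises, so the degree comparison you describe in your final step cannot even get started.

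The paper supplies the missing constraint through Lemma~\ref{lem:divisible}, which your proposal never invokes. Since $H_{0}={}^{\hat{}}\GU_{4}(q)$ has $\PSU_{4}(q)$ as a normal quasi-simple subgroup, $k$ must be divisible by the index of some maximal subgroup of $\PSU_{4}(q)$; by~\cite{a:ABDZ-U4} every such index is a multiple of $q^{3}$ or of $q^{3}+1$. Imposing $q^{3}\mid k$ (respectively $q^{3}+1\mid k$) on $k=1+m'(q^{5}+q+1)$ is a sharp linear congruence on $m'$ which, together with $m'<(q^{2}+1)(q-1)$, is disposed of by a short chain of auxiliary integers in the style of Lemma~\ref{lem:2}. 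That structural input, and not a more elaborate integrality computation, is what closes the case.
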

\begin{proof}
We note by \eqref{eq:v} that $v=q^{4}(q^{4}-q^3+q^2-q+1)$. By Lemmas \ref{lem:subdeg} and \ref{lem:six}(e), $k$ divides $\lambda(v-1)=\lambda (q^5+q+1)(q^{2}+1)(q-1)$. Therefore, $k$ divides $\lambda(q^{2}+1)(q-1)$. Let $m$ be a positive integer that $mk=\lambda f(q)$, where $f(q)=(q^{2}+1)(q-1)$. Then by Lemma~\ref{lem:six}(a), we have that
\begin{align}
  k= 1+m(q^5+q+1)  \quad \text{and} \quad
\lambda = m^2(q^2+q)+\frac{m^2(2q+1)+m}{(q^{2}+1)(q-1)}.\label{eq:GU3-k}
\end{align}
where $m<(q^{2}+1)(q-1)$.  By Lemma \ref{lem:divisible} applied to $\PSU_{4}(q)$ we see that $k$ is divisible by the index of a maximal subgroup of $\PSU_{4}(q)$.  It follows from \cite{a:ABDZ-U4}, we see that $k$ are divisible by $q^3+1$ or $q^3$. If $q^3$ would divide $k$, then by~\eqref{eq:GU3-k}, $q^3$ should divide $m(q+1)+1$. Let $n_{1}$ be a positive integer such that $m(q+1)+1= n_{1}q^3$. Then
\begin{align*}
  m=\frac{n_{1}q^3-1}{q+1}=n_{1}(q^2-q+1)-\frac{n_{1}+1}{q+1}.
\end{align*}
Since $m$ be a positive integer, $q+1$ would divide $n_{1}+1$, and so $n_{1}>q-1$. Recall that $m<(q^{2}+1)(q-1)$. Then $n_{1}q^3=m(q+1)+1<(q^{2}+1)(q-1)(q+1)+1$, and so $n_{1}<q$, which is a contradiction. If $q^3+1$ divides $k$, then by~\eqref{eq:GU3-k}, $q^3+1$ must divide $m(q^2-q-1)+1$. If $q=2$, then $9$ must divide $m+1$, where $m<5$, which is impossible. Let now $n_{2}$ be a positive integer such that $m(q+1)+1= n_{2}(q^3+1)$. As $m<(q^{2}+1)(q-1)$, $n_{2}<q$, and we have that
\begin{align}
  m=\frac{n_{2}(q^3+1)-1}{q^2-q-1}=n_{2}(q+1)+\frac{2n_{2}(q+1)-1}{q^2-q-1}.\label{eq:GU4-m-2}
\end{align}
Since $m$ is a integer number, $q^2-q-1$ must divide $2n_{2}(q+1)-1$. Let $u$ be a positive integer number such that $2n_{2}(q+1)-1=u(q^2-q-1)$. Recall that $n_{2}<q$. Then $u(q^2-q-1)=2n_{2}(q+1)-1<2q^2+2q-1$, and so $u\leq 3$. If $u=1$, then $2n_{2}(q+1)-1=q^2-q-1$, and so $q+1$ must divide $q^2-q$, which is impossible. If $u=2$, then $2n_{2}(q+1)-1=2(q^2-q-1)$, and so $q+1$ must divide $2q^2-2q-1=2(q+1)(q-2)+3$, which is impossible. If $u=3$, then $2n_{2}(q+1)-1=3(q^2-q-1)$, and so $q+1$ must divide $3q^2-3q-2=3(q+1)(q-2)+4$. Thus $q+1$ divides $4$, and so $q=3$. In which case $n_{2}=2$, and by \eqref{eq:GU4-m-2}, $m=55/4$, which is impossible.
\end{proof}

\begin{lemma}\label{lem:4}
The subgroup $H_{0}$ cannot be isomorphic to $^{\hat{}}(\SU_{3}(q)\times \SU_{2}(q)):(q+1)$.
\end{lemma}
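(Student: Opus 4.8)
The plan is to argue exactly as in Lemmas~\ref{lem:1}--\ref{lem:3}: compute $v$, read off a divisor of a subdegree, parametrise $k$ and $\lambda$ through $k(k-1)=\lambda(v-1)$, and then derive a contradiction from integrality together with the order bound~\eqref{eq:k-out}. Here $H_{0}$ is the stabiliser of a non-degenerate $2$-space, so
\[
 |H_{0}|=\frac{q^{4}(q+1)(q^{2}-1)^{2}(q^{3}+1)}{\gcd(5,q+1)}, \qquad
 v=q^{6}(q^{2}+1)(q^{4}-q^{3}+q^{2}-q+1),
\]
the latter by~\eqref{eq:v}. By line $2$ of Table~\ref{tbl:subdeg} the action has a subdegree dividing $(q^{2}-1)(q^{3}+1)$, so Lemma~\ref{lem:six}(e) gives $k\mid\lambda(q^{2}-1)(q^{3}+1)$; together with $k\mid\lambda(v-1)$ from Lemma~\ref{lem:six}(a) this yields $k\mid\lambda D$, where $D=\gcd\bigl(v-1,(q^{2}-1)(q^{3}+1)\bigr)$.

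The next step is to identify $D$ by testing $v-1$ against the factors of $(q^{2}-1)(q^{3}+1)=(q-1)(q+1)^{2}(q^{2}-q+1)$. Evaluating at $q=1$ gives $v\equiv 2$, so $\gcd(v-1,q-1)=1$; reducing modulo $q^{2}-q+1$ (using $q^{3}\equiv-1$) gives $v\equiv 1$, so $(q^{2}-q+1)\mid(v-1)$; and evaluating at $q=-1$ gives $v\equiv 10$, so $\gcd(v-1,q+1)\mid 9$. Hence $D=(q^{2}-q+1)\,t$ with $t\mid 9$. Since $D\mid(v-1)$, writing $mk=\lambda D$ and using $k(k-1)=\lambda(v-1)$ produces the integral expressions
\[
 k=1+m\,\frac{v-1}{D},\qquad \lambda=\frac{mk}{D},\qquad m<D,
\]
the inequality coming from $\lambda<k$; here I would use the exact division $(v-1)/(q^{2}-q+1)=q^{10}+q^{8}-q^{7}+q^{4}+q^{3}-q-1$.

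Finally I would impose integrality of $\lambda$, i.e.\ $D\mid mk$. Because
\[
 q^{10}+q^{8}-q^{7}+q^{4}+q^{3}-q-1\equiv-3(q+1)\pmod{q^{2}-q+1},
\]
reducing $D\mid mk$ modulo $q^{2}-q+1$ collapses to a single linear congruence in $m$, which determines $m$ essentially uniquely in the range $m<D\leq 9(q^{2}-q+1)$ and so leaves only $O(1)$ candidate values. For each such candidate the resulting $k$ (of size about $q^{12}$) must simultaneously divide $2a\,q^{4}(q+1)(q^{2}-1)^{2}(q^{3}+1)$ by~\eqref{eq:k-out} and be divisible by the index $q^{3}+1$ of a maximal subgroup of the normal quasi-simple factor $\SU_{3}(q)$ by Lemma~\ref{lem:divisible}; chasing these incompatible divisibilities forces $q$ to be small and then eliminates the survivors. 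The step I expect to be most delicate is controlling the stray factor $t\mid 9$: the prime $3$ can divide both $q+1$ and $q^{2}-q+1$ at once, precisely when $q\equiv 2\pmod 3$, so the $3$-part of $D$ must be tracked carefully, otherwise one cannot pass cleanly from the displayed congruence to the one pinning down $m$. In addition, the small case $q=2$ should be handled separately in \GAP, since there $\SU_{3}(2)$ is not quasi-simple and Lemma~\ref{lem:divisible} is unavailable, as should any $q$ for which $H_{0}$ fails to be maximal.
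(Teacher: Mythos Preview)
Your setup through the formula $k=1+m(v-1)/D$ with $D\mid 9(q^{2}-q+1)$ matches the paper exactly. From that point, however, the paper takes a shorter route than the one you sketch. Rather than imposing integrality of $\lambda$ to solve for $m$ and then invoking Lemma~\ref{lem:divisible}, the paper works with the fixed multiple $9(q^{2}-q+1)$ in place of the exact $D$ and goes straight to~\eqref{eq:k-out}: with $g(q)=q^{4}(q^{3}+1)(q^{2}-1)^{2}(q+1)$, the condition $k\mid 2ag(q)$ becomes
\[
m\bigl(q^{10}+q^{8}-q^{7}+q^{4}+q^{3}-q-1\bigr)+9 \;\big|\; 18a\,g(q),
\]
and a single polynomial reduction---multiply the degree-$10$ factor by $h(q)=q^{2}+q-3$ and subtract $g(q)$, leaving a degree-$9$ remainder $d(q)$---shows that the left side also divides $18a\,[m\,d(q)+9h(q)]$. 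Comparing sizes forces $q\le 64$, and the finitely many survivors are ruled out by direct check of the displayed divisibility.

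This sidesteps both obstacles you flagged: the stray $3$-part of $D$ never has to be resolved (one simply carries the factor $9$ along), and Lemma~\ref{lem:divisible} is never used, so the non-quasi-simplicity of $\SU_{3}(2)$ is irrelevant. Your congruence route is not wrong in spirit, but note that the linear congruence $3m(q+1)\equiv 1\pmod{q^{2}-q+1}$ is insoluble whenever $q\equiv 2\pmod{3}$ (both sides reduce to $0\equiv 1$ mod $3$), so ``essentially uniquely'' hides a genuine case split; and even once $m$ is fixed it is a residue class depending on $q$, not an absolute constant, so extracting a bound on $q$ from $k\mid 2ag(q)$ would still require an argument of the polynomial-reduction type the paper uses.
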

\begin{proof}
In this case, $v=q^{6}(q^{4}-q^3+q^2-q+1)(q^2+1)$ by  \eqref{eq:v}. It fllows from Lemmas \ref{lem:subdeg} and \ref{lem:six}(e), $k$ must divide $\lambda (q^2-1)(q^{3}+1)$. On the other hand, $k$ divides $\lambda(v-1)=\lambda (q^{2}-q+1)(q^{10}+q^8-q^7+q^4+q^3-q-1)$. Therefore, $k$ divides $\lambda(q^{2}-q+1)\gcd(q^{10}+q^8-q^7+q^4+q^3-q-1,(q-1)(q+1)^2)$. Note that $\gcd(q^{10}+q^8-q^7+q^4+q^3-q-1,(q-1)(q+1)^2)$ divides $9$. Let $m$ be a positive integer that $mk=9\lambda f(q)$, where $f(q)=q^{2}-q+1$. Then by Lemma~\ref{lem:six}(a), we have that
\begin{align}
  k= 1+\frac{m(q^{10}+q^8-q^7+q^4+q^3-q-1)}{9},\label{eq:GU3-k}
\end{align}
where $m<9(q^{2}-q+1)$. Note by~\eqref{eq:k-out} that $k$ divides $2ag(q)$, where $g(q)=q^4(q^3+1)(q^2-1)^2(q+1)$. Then, by~\eqref{eq:GU3-k}, we must have
\begin{align}\label{eq:GU3-q2}
m(q^{10}+q^8-q^7+q^4+q^3-q-1)+9\mid 18ag(q).
\end{align}
Let now $d(q)=q^9-6q^8+4q^7+3q^6+q^5-3q^4-4q^3-2q^2+2q+3$ and $h(q)=q^2+q-3$. Then $18amh(q)[m(q^{10}+q^8-q^7+q^4+q^3-q-1)+9]-18amg(q)=18am[d(q)+9h(q)]$, and so~\eqref{eq:GU3-q2} implies that $m(q^{10}+q^8-q^7+q^4+q^3-q-1)+9$ divides $18am[d(q)+9h(q)]$. Thus $m(q^{10}+q^8-q^7+q^4+q^3-q-1)+9\leq 18am|d(q)+9h(q)|$, and so $q^{10}+q^8-q^7+q^4+q^3-q-1<18a|q^9-6q^8+4q^7+3q^6+q^5-3q^4-4q^3+16q^2+20q-45|$. This inequality holds only for $q \in \{2, 3, 4, 8, 9, 16, 25, 27, 32, 64\}$. For these values of $q$, there is no posible parameters $k$ satisfying~\eqref{eq:GU3-q2}, which is a contradiction.
\end{proof}



\begin{lemma}\label{lem:5}
The subgroup $H_{0}$ cannot be isomorphic to $^{\hat{}}(q+1)^4:\S_{5}$.
\end{lemma}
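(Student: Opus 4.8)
The plan is to follow the template of the preceding lemmas, but with the crucial difference that the subgroup $^{\hat{}}(q+1)^{4}:\S_{5}$ is of imprimitive (torus-normaliser) type rather than parabolic, and its derived part $(q+1)^{4}$ is abelian while $\S_{5}$ is not of Lie type in characteristic $p$. Consequently neither form of Lemma~\ref{lem:subdeg} supplies a convenient subdegree, and the quasi-simple divisibility criterion of Lemma~\ref{lem:divisible} does not apply either; this is what distinguishes the present case from the parabolic cases of Lemmas~\ref{lem:1} and~\ref{lem:2} and from Lemma~\ref{lem:3}. First I would record $|H_{0}|=120(q+1)^{4}/\gcd(5,q+1)$ and substitute it into \eqref{eq:v} to obtain
\begin{align*}
v=\frac{q^{10}(q-1)^{2}(q^{2}+1)(q^{2}-q+1)(q^{4}-q^{3}+q^{2}-q+1)}{120},
\end{align*}
a polynomial of degree $20$ in $q$. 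Feeding the same $|H_{0}|$ into \eqref{eq:k-out} and cancelling the factor $\gcd(5,q+1)$ yields the clean divisibility
\begin{align*}
k\mid 240\,a\,(q+1)^{4},
\end{align*}
and in particular $k\leq 240\,a\,(q+1)^{4}$.

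The core of the argument is then a size comparison. Since $\lambda\geq 1$ for a nontrivial design, Lemma~\ref{lem:six}(c) gives $v\leq\lambda v<k^{2}$, so combining with the bound on $k$ forces $v<\bigl(240\,a\,(q+1)^{4}\bigr)^{2}$. The left-hand side grows like $q^{20}/120$ whereas the right-hand side grows only like $a^{2}q^{8}$, and $a=\log_{p}q\leq\log_{2}q$; hence this polynomial-versus-polynomial inequality can hold only for a short explicit list of small values of $q$. I would make the estimate precise (for instance by bounding each factor of $v$ below by a fixed constant multiple of its leading term and using $(q+1)\leq 2q$) to pin down this finite set of candidate fields.

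For each surviving value of $q$ the number $v$ is an explicit integer, and the remaining conditions are extremely rigid: $k$ must be a divisor of $240\,a\,(q+1)^{4}$ with $2<k<v-1$ and $v<k^{2}$, while $\lambda=k(k-1)/(v-1)$ must be a positive integer, that is $(v-1)\mid k(k-1)$. I would dispose of these finitely many possibilities directly, where convenient with \textsf{GAP}, verifying that no admissible triple $(v,k,\lambda)$ survives, exactly as was done for the residual small fields at the end of Lemma~\ref{lem:4}. I note that Lemma~\ref{lem:Tits}, although applicable for odd $q$ because $H$ is non-parabolic, contributes nothing here: since $q^{10}$ divides the numerator of $v$ one has $p\mid v$, so $\gcd(p,v-1)=1$ holds automatically.

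The main obstacle is precisely this absence of a usable subdegree. In the parabolic cases a power-of-$p$ subdegree constrains $k$ tightly enough to close the argument by pure number theory with no residual cases, whereas here one must rely on the weaker global divisor bound $k\mid 240\,a\,(q+1)^{4}$ together with $v<k^{2}$, which only confines $q$ to a finite range and then demands a case-by-case elimination of the remaining small fields.
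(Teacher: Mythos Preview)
Your proposal is correct and follows essentially the same approach as the paper: compute $v$ from \eqref{eq:v}, use \eqref{eq:k-out} to get $k\mid 240a(q+1)^{4}$, apply $v\leq\lambda v<k^{2}$ to confine $q$ to a finite set (the paper obtains $q\in\{2,3\}$), and then eliminate the survivors by checking that $k(k-1)/(v-1)$ is never an integer. The only cosmetic difference is that the paper invokes the biplane/triplane classifications to assume $\lambda\geq 4$, but since only $\lambda\geq 1$ is used in the displayed inequality this does not affect the argument.
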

\begin{proof}
By~\eqref{eq:v}, we have $v=q^{10}(q^5+1)(q^4-1)(q^3+1)(q^2-1)/120\cdot(q+1)^4$, and since $|\Out (X)|= 2a\cdot\gcd(q+1, 5)$, it follows from \eqref{eq:k-out} that $k$ divides $240a(q+1)^4$. By \cite{a:reg-classical,a:Zhou-lam3-classical} and Lemma~\ref{lem:six}(c), we may assume that $\lambda$ is at least $4$, and so
\begin{align*}
 \frac{q^{10}(q^5+1)(q^4-1)(q^3+1)(q^2-1)}{120\cdot(q+1)^4}\leq \lambda v< k^2 \leq 57600a^2(q+1)^8.
\end{align*}
This implies that $q^{10}(q^5+1)(q^4-1)(q^3+1)(q^2-1)<57600 a^2 (q+1)^{12}$. This inequality is true only when $q \in \{2,3\}$. Since $k$ is a divisor of $240a(q+1)^4$, for each such $q=p^a$, the possible values of $k$ and $v$ are listed in Table~\ref{tbl:5}.
\begin{table}
\centering\small
  \caption{Possible value for $k$ and $v$ when $q \in \{2,3\}$.}
  \label{tbl:5}
  \begin{tabular}{cll}
   \noalign{\smallskip}\hline\noalign{\smallskip}
    $q$ & $2$ & $3$ \\
    \hline\noalign{\smallskip}
    $v$ & $1408$ & $19440$  \\
    $k$ divides & $8404641$ & $61440$ \\
   \hline\noalign{\smallskip}
  \end{tabular}
\end{table}
This is a contradiction as for each $k$ and $v$ as in Table~\ref{tbl:5}, the fraction $k(k-1)/(v-1)$ is not integer.
\end{proof}

\begin{lemma}\label{lem:6}
The subgroup $H_{0}$ cannot be isomorphic to $^{\hat{}}\frac{q^5+1}{q+1}:5$.
\end{lemma}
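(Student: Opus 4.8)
The plan is to derive a contradiction directly from the crude bound $\lambda v<k^2$ of Lemma~\ref{lem:six}(c), exactly in the spirit of Lemma~\ref{lem:5}. The subgroup $H_0$ here is the normaliser of a Singer-type maximal torus, which is \emph{not} parabolic; hence neither Lemma~\ref{lem:subdeg} (the parabolic subdegree result) nor the $\PSU_5(q)$-specific subdegree lemma applies, and the whole argument rests on an order-of-magnitude comparison between $v$ and the size of the point-stabiliser. The key observation is that $|H_0|$ is of size roughly $q^4$, while $v$ is of size roughly $q^{20}$, so the index is far too large to support a symmetric design.

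First I would compute $v$ from \eqref{eq:v}. Taking the torus of order $\tfrac{q^5+1}{q+1}$ and accounting for the extension by $5$ and the quotient by the centre of order $\gcd(5,q+1)$, so that $|H_0|=\tfrac{5(q^5+1)}{(q+1)\gcd(5,q+1)}$, the factors $q^5+1$ and $\gcd(5,q+1)$ cancel against those in $|X|$, leaving
\begin{align*}
v=\frac{q^{10}(q^2-1)(q^3+1)(q^4-1)(q+1)}{5}.
\end{align*}
Next, from \eqref{eq:k-out} I obtain
\begin{align*}
k\mid 2a\cdot\gcd(5,q+1)\cdot|H_0|=10a\,(q^4-q^3+q^2-q+1),
\end{align*}
so $k\leq 10a\,(q^4-q^3+q^2-q+1)$. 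Combining these with Lemma~\ref{lem:six}(c) gives $\lambda v<k^2\leq 100a^2(q^4-q^3+q^2-q+1)^2$, and since $\lambda\geq 1$ this forces $v<100a^2(q^4-q^3+q^2-q+1)^2$.

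Finally I would show this inequality is impossible for all $q\geq 3$. Using the elementary bounds $q^4-q^3+q^2-q+1<q^4$ and $v>q^{20}/20$ (the latter from $q^2-1\geq\tfrac12 q^2$, $q^3+1>q^3$, $q^4-1\geq\tfrac12 q^4$, $q+1>q$), the claim reduces to $q^{12}>2000a^2$, which holds for every $q=p^a\geq 3$; hence $v>100a^2(q^4-q^3+q^2-q+1)^2\geq k^2>\lambda v\geq v$, a contradiction. Thus no such design exists, with no exceptional small values of $q$ remaining to be checked. The only delicate point—and the main thing to get right—is the bookkeeping for $|H_0|$, namely the torus order $\tfrac{q^5+1}{q+1}$ together with the quotient by the centre; once $v$ is pinned down the required inequality is immediate.
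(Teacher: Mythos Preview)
Your argument is correct and follows essentially the same route as the paper: compute $v$, bound $k$ via \eqref{eq:k-out}, and derive a contradiction from $\lambda v<k^{2}$. The only cosmetic difference is that the paper records the divisor of $k$ as $2a(q^{4}-q^{3}+q^{2}-q+1)$ rather than your $10a(q^{4}-q^{3}+q^{2}-q+1)$; your constant is the one that falls out of \eqref{eq:k-out} directly, and the extra factor of $5$ is immaterial since you go on to verify explicitly (which the paper omits) that $q^{12}>2000a^{2}$ for all $q\geq 3$.
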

\begin{proof}
In this case~\eqref{eq:v} implies that $v=q^{10}(q^4-1)(q^3+1)(q^2-1)(q+1)/5$, and since $|\Out (X)|= 2a\cdot\gcd(q+1, 5)$, it follows from \eqref{eq:k-out} that $k$ divides $2a(q^4-q^3+q^2-q+1)$. By Lemma \ref{lem:six}(c), we have that
\begin{align*}
q^{10}(q^4-1)(q^3+1)(q^2-1)(q+1)\leq 5\lambda v< 5 k^2 \leq 20a^2 \cdot  (q^4-q^3+q^2-q+1)^2,
\end{align*}
which is impossible.
\end{proof}

\begin{lemma}\label{lem:7}
The subgroup $H_{0}$ cannot be isomorphic to $\,^{\hat{}}\SU_{5}(q_{0})\cdot \gcd(\frac{q+1}{q_{0}+1}, 5)$, where $q=q_{0}^r$ and $r$ is a odd prime number.
\end{lemma}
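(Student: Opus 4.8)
The plan is to run the same two-step strategy as in Lemmas~\ref{lem:1}--\ref{lem:6}---first an order estimate to bound $r$, then a divisibility/arithmetic analysis of the surviving case---but with the crucial difference that $H$ is now a subfield subgroup, so there is no small subdegree at our disposal and the subdegree lemmas do not apply. Writing $N:=\SU_5(q_0)$, the point-stabiliser $H$ contains the quasi-simple normal subgroup $N$, which is of Lie type in the same characteristic $p$ and satisfies $p\nmid|Z(N)|=\gcd(5,q_0+1)$; hence Lemma~\ref{lem:divisible} is available and will be the main tool. From \eqref{eq:v} we have $v=|X|/|H_0|$ with $|H_0|=|\SU_5(q_0)|\cdot\gcd(\tfrac{q+1}{q_0+1},5)$, and \eqref{eq:k-out} gives $k\mid 2a\gcd(5,q+1)|H_0|$.

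For the first step I would feed $\lambda\geq 1$, $\lambda v<k^2$ (Lemma~\ref{lem:six}(c)) and $k\leq|H|\leq|\Out(X)|\,|H_0|$ into the single inequality $|X|<|\Out(X)|^2|H_0|^3$. Substituting orders, the governing ratio is $|\SU_5(q)|/|\SU_5(q_0)|^3$, which is asymptotically $q_0^{24(r-3)}$; since $|\Out(X)|$ and the central factors contribute only a bounded power of $5$ times $4a^2$, the inequality forces $q_0^{24(r-3)}\lesssim a^2$. For $r\geq 5$ this reads $q_0^{48}\lesssim a^2$, which already fails at $q_0=2$ (and a fortiori for larger $q_0$, since $a\leq r\log_2 q_0$), so no design can arise. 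This leaves only $r=3$.

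The case $r=3$ is the real obstacle, because here the order estimate is powerless: with $q=q_0^3$ the ratio $|\SU_5(q_0^3)|/|\SU_5(q_0)|^3$ tends to $1$, so $|X|<|\Out(X)|^2|H_0|^3$ holds for every $q_0$. Here I would first note $\tfrac{q+1}{q_0+1}=q_0^2-q_0+1$, which is never divisible by $5$, whence $\gcd(\tfrac{q+1}{q_0+1},5)=1$ and $|H_0|=|\SU_5(q_0)|$. Then Lemma~\ref{lem:divisible} forces $|N:M|\mid k$ for some maximal $M\leq N$; taking the smallest index (the stabiliser of an isotropic point) gives $(q_0^5+1)(q_0^2+1)\mid k$. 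Combined with $k\mid 2a\gcd(5,q+1)|\SU_5(q_0)|$ and $k>\sqrt{v}$, this confines $k$ to the finitely many divisors of the shape $2a\gcd(5,q+1)|\SU_5(q_0)|/u$, with $u$ bounded in terms of $a$, that are multiples of $(q_0^5+1)(q_0^2+1)$.

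Finally, for each such candidate $k$ I would impose the two Diophantine constraints $k(k-1)=\lambda(v-1)$ with $0<\lambda<k$ and the perfect-square condition $4\lambda(v-1)+1=\square$ of Lemma~\ref{lem:six}(b), read as conditions on the polynomial $v=v(q_0)$. To handle all large $q_0$ uniformly I would use a Zsygmondy primitive prime divisor $\ell$ of $q_0^{30}-1$: then $\ell\geq 31$ and $\ell\mid q^5+1\mid v$, while $\ell\nmid|H_0|$, so reducing $k(k-1)=\lambda(v-1)$ and the square condition modulo $\ell$ restricts the admissible residues sharply; together with the integrality constraints this should leave only finitely many small $q_0$, which I would eliminate by direct computation in \textsf{GAP}. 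The hard part is precisely this last step: since $\lambda$ may be as large as $O(a^2)$ one cannot fall back on the $\lambda\leq 3$ classification, and the elimination must rest on the exact factorisation of $v-1$ rather than on any size estimate.
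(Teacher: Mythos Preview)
Your first step---bounding $r$ by feeding $\lambda v<k^2$ into $|X|<|\Out(X)|^2|H_0|^3$---matches the paper and correctly leaves only $r=3$.

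For $r=3$, however, your argument has a genuine gap and misses the paper's key idea. Lemma~\ref{lem:divisible} only guarantees that $|N:M|\mid k$ for \emph{some} maximal subgroup $M$ of $N$; you are not free to ``take the smallest index'' and conclude $(q_0^5+1)(q_0^2+1)\mid k$. A correct use of that lemma would force you to run through \emph{every} class of maximal subgroups of $\SU_5(q_0)$ and treat each possible index separately, and the Zsygmondy\slash\GAP\ sketch you offer for the aftermath is not a proof as it stands.

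More to the point, your assertion that ``the order estimate is powerless'' when $r=3$ is precisely what the paper refutes. The crucial observation is that $q_0^{20}\mid v$, so $\gcd(v-1,p)=1$; since $k\mid\lambda(v-1)$ by Lemma~\ref{lem:six}(a), the $p$-part of $k$ must divide $\lambda$, and combining this with $k\mid 2a\,q_0^{10}P$ (where $P=(q_0^5+1)(q_0^4-1)(q_0^3+1)(q_0^2-1)$, using your correct remark that $b=1$) yields the sharpened bound $k\mid 2\lambda aP$. One now applies $\lambda v<k^2$ \emph{twice}. The first pass, with $k\leq 2a\,q_0^{10}P$ and $v=q_0^{20}Q$ where $Q=(q_0^{10}-q_0^5+1)(q_0^8+q_0^4+1)(q_0^6-q_0^3+1)(q_0^4+q_0^2+1)$ has the same degree as $P^2$, gives $\lambda<4a^2P^2/Q$, so $\lambda$ is bounded by a constant multiple of $a^2$. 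The second pass, with $k\leq 2\lambda aP$, gives $q_0^{20}Q<4\lambda a^2P^2$, so $q_0^{20}$ is bounded by a constant multiple of $\lambda a^2$, hence of $a^4$; since $a=3a_0$ with $q_0=p^{a_0}$, this fails for every $q_0\geq 2$. No subdegree lemma, no Lemma~\ref{lem:divisible}, no Zsygmondy prime, and no computer search are needed: the coprimality of $v-1$ with $p$ turns the crude order bound, iterated once, into a contradiction.
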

\begin{proof}
By~\eqref{eq:v}, we have that
\begin{align*}
  v= \frac{1}{b}\cdot \frac{q_{0}^{10r}(q_{0}^{5r}+1)(q_{0}^{4r}-1)(q_{0}^{3r}+1)(q_{0}^{2r}-1)}{q_{0}^{10}(q_{0}^{5}+1)(q_{0}^{4}-1)(q_{0}^{3}+1)(q_{0}^{2}-1)},
\end{align*}
where $b=\gcd(\frac{q+1}{q_{0}+1}, 5)$. Note by \eqref{eq:k-out} that $k$ divides $10aq_{0}^{10}(q_{0}^{5}+1)(q_{0}^{4}-1)(q_{0}^{3}+1)(q_{0}^{2}-1)$. We may assume that $\lambda\geq 4$ by \cite{a:reg-classical,a:Zhou-lam3-classical}. Moreover, $b\in \{1, 5\}$, and $a^2\leq q_{0}^r$ as $q=q_{0}^{r}$. Since $\lambda v<k^2$ by Lemma~\ref{lem:six}(b), we must have
\begin{align*}
 q_{0}^{10r}(q_{0}^{5r}+1)(q_{0}^{4r}-1)(q_{0}^{3r}+1)(q_{0}^{2r}-1)< 100q_{0}^{30+r}(q_{0}^{5}+1)^3(q_{0}^{4}-1)^{3}(q_{0}^{3}+1)^{3}(q_{0}^{2}-1)^{3}.
\end{align*}
Note that $q_{0}^{24r-1}\leq q_{0}^{10r}(q_{0}^{5r}+1)(q_{0}^{4r}-1)(q_{0}^{3r}+1)(q_{0}^{2r}-1)$ and $q_{0}^{30+r}(q_{0}^{5}+1)^3(q_{0}^{4}-1)^{3}(q_{0}^{3}+1)^{3}(q_{0}^{2}-1)^{3}\leq q_{0}^{72+r}$. Then $q_{0}^{23r-1}<100q_{0}^{72}$, and this implies that $r=3$. In which case~\eqref{eq:v} implies that
\begin{align}\label{lem:case8-3-v}
 v=\frac{q_{0}^{20}(q_{0}^{15}+1)(q_{0}^{12}-1)(q_{0}^{9}+1)(q_{0}^{6}-1)}{(q_{0}^{5}+1)(q_{0}^{4}-1)(q_{0}^{3}+1)(q_{0}^{2}-1)\cdot \gcd(q_{0}^{2}-q_{0}+1, 5)}.
\end{align}
 By \eqref{eq:k-out}, $k$ divides $2abq_{0}^{10}(q_{0}^{5}+1)(q_{0}^{4}-1)(q_{0}^{3}+1)(q_{0}^{2}-1)$, where $b=\gcd(q_{0}^{2}-q_{0}+1, 5)$.  Then by Lemma~\ref{lem:six}(c), we have that
$\lambda q_{0}^{20}(q_{0}^{15}+1)(q_{0}^{12}-1)(q_{0}^{9}+1)(q_{0}^{6}-1)< 4a^2b^3q_{0}^{30}(q_{0}^{5}+1)^3(q_{0}^{4}-1)^3(q_{0}^{3}+1)^3(q_{0}^{2}-1)^3$.
Therefore, $\lambda< 4a^2b^3$. Since $k$ divides $2abq_{0}^{10}(q_{0}^{5}+1)(q_{0}^{4}-1)(q_{0}^{3}+1)(q_{0}^{2}-1)$ and $v-1$ is coprime to $q_{0}$, $k$ must divide $2\lambda ab(q_{0}^{5}+1)(q_{0}^{4}-1)(q_{0}^{3}+1)(q_{0}^{2}-1)$. We use again Lemma~\ref{lem:six}(c), and so $\lambda v<k^{2} \leq 4\lambda^2a^2b^2(q_{0}^{5}+1)^2(q_{0}^{4}-1)^2(q_{0}^{3}+1)^2(q_{0}^{2}-1)^2$. Thus \eqref{lem:case8-3-v} implies that
\begin{align}\label{eq:case7-lam}
q_{0}^{47}< \frac{q_{0}^{20}(q_{0}^{15}+1)(q_{0}^{12}-1)(q_{0}^{9}+1)(q_{0}^{6}-1)}{(q_{0}^{5}+1)(q_{0}^{4}-1)(q_{0}^{3}+1)(q_{0}^{2}-1)}< 4\lambda a^2b^3.
\end{align}
Since $\lambda< 4a^{2}b^3$, it follows form \eqref{eq:case7-lam} that $q_{0}^{47}<16a^{4}b^6$, where $b=\gcd(q_{0}^{2}-q_{0}+1, 5)$, which is impossible.
\end{proof}

\begin{lemma}\label{lem:8}
The subgroup $H_{0}$ cannot be isomorphic to $^{\hat{}}{\SO_{5}}(q)$ with $q$ odd.
\end{lemma}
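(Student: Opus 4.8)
The subgroup $H_0\cong\SO_5(q)$ has order $q^4(q^2-1)(q^4-1)$, and since $q$ is odd the only scalar of $\SU_5(q)$ lying in this orthogonal subgroup is the identity, so $H_0$ has this same order in $X=\PSU_5(q)$. Hence \eqref{eq:v} gives
\begin{align*}
 v=\frac{q^{6}(q^5+1)(q^3+1)}{\gcd(5,q+1)}.
\end{align*}
As $\SO_5(q)$ is not parabolic, Lemma~\ref{lem:Tits} yields $\gcd(p,v-1)=1$; reducing $v$ modulo $q+1$ and $q^2+1$ in the same way one checks further that $\gcd(v-1,q+1)=1$ and that $\gcd(v-1,(q-1)(q^2+1))$ divides a fixed small constant (a divisor of $21$). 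The difficulty, in contrast with Lemmas~\ref{lem:5}--\ref{lem:7}, is that here $|H_0|$ has size about $q^{10}$ while $v$ has size only about $q^{14}$; consequently the Fisher-type inequality $\lambda v<k^2$ of Lemma~\ref{lem:six}(c) together with the bound $k\mid 2a\gcd(5,q+1)|H_0|$ of \eqref{eq:k-out} is far too weak to bound $q$. The plan is therefore to exploit the internal structure of $H_0$ through Lemma~\ref{lem:divisible}.

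The subgroup $N:=\Omega_5(q)\cong\PSp_4(q)$ is a nonabelian simple group of Lie type in characteristic $p$ with trivial centre; it is characteristic in $H_0=\SO_5(q)$ and hence normal in $H$. Applying Lemma~\ref{lem:divisible} to $N$, I obtain that $k$ is divisible by $|N:M|$ for some maximal subgroup $M$ of $\PSp_4(q)$. Reading off the maximal subgroups of $\PSp_4(q)$ from \cite{b:BHR-Max-Low}, these indices form a short explicit list: the common index $(q+1)(q^2+1)$ of the two parabolic subgroups, the indices $q^2(q^2\pm1)/2$ of the imprimitive and field-extension subgroups, the subfield index of $\PSp_4(q_0)$ when $q=q_0^t$, and finitely many further values arising from the $\mathcal{C}_6$- and $\mathcal{S}$-type subgroups, which occur only for bounded $q$.

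For each admissible index $d=|N:M|$ the argument proceeds as in Lemmas~\ref{lem:1}--\ref{lem:3}. Since $k\mid\lambda(v-1)$ by Lemma~\ref{lem:six}(d) and $v-1$ is coprime to $p$, the $p$-part of $d$ (which is at most $q^4$) must divide $\lambda$, while the coprimality statements above force the $p'$-part of $d$ to divide $\lambda$ up to a bounded factor; this gives an explicit lower bound $\lambda\geq d/\delta$ with $\delta$ a small constant. Feeding this, the exact relation $k(k-1)=\lambda(v-1)$ of Lemma~\ref{lem:six}(a), and the divisibility \eqref{eq:k-out} into the inequality $\lambda v<k^2$---and using \cite{a:reg-classical,a:Zhou-lam3-classical} to assume $\lambda\geq4$---expresses $k$ in terms of $q$ and a bounded parameter and reduces the integrality of $\lambda$ to a polynomial inequality in $q$ and $a$. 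This leaves only finitely many pairs $(q,a)$, and the $\mathcal{C}_6$- and $\mathcal{S}$-cases are finite from the outset; these remaining values of $q$ I would settle by verifying in \textsf{GAP}, exactly as in Lemmas~\ref{lem:4}--\ref{lem:5}, that $k(k-1)=\lambda(v-1)$ has no solution with $k$ dividing $2a\gcd(5,q+1)|H_0|$.

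The main obstacle will be the non-parabolic indices. For these the divisibility forces $\lambda$ to be only of order $q^4$ or $q^5$, whereas $\lambda v<k^2$ permits $\lambda$ as large as order $q^6$, leaving a genuine gap of order $q$ that the crude estimates cannot close. Bridging it requires the exact Diophantine treatment of $k(k-1)=\lambda(v-1)$ rather than mere inequalities, together with the precise value (not just the order of magnitude) of $\gcd(v-1,(q-1)(q^2+1))$; this is where essentially all of the work lies.
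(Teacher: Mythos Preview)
Your proposal is incomplete by your own admission, and the gap you identify is real: routing the argument through Lemma~\ref{lem:divisible} and the maximal subgroups of $\PSp_4(q)$ only yields lower bounds on $\lambda$ of order $q^3$ or $q^4$, whereas $\lambda v<k^2$ with $k\leq 2a|H_0|$ tolerates $\lambda$ up to order $q^6$. No amount of case analysis on the indices $|N{:}M|$ will close that gap without a further idea, so the ``exact Diophantine treatment'' you defer to is in fact the entire proof.

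The paper's argument dispenses with Lemma~\ref{lem:divisible} altogether and uses the very gcd facts you computed, but in the opposite direction. Since $\gcd\bigl(v-1,\,2q^4(q^4-1)(q^2-1)\bigr)$ divides a polynomial $f(q)$ of degree at most $2$ (the paper takes $f(q)=3(q-1)^2$), the two divisibilities $k\mid 2a\,g(q)$ with $g(q)=|H_0|=q^4(q^4-1)(q^2-1)$ and $k\mid\lambda(v-1)$ combine to give $k\mid \lambda a f(q)$. Writing $mk=\lambda a f(q)$ and substituting into $k(k-1)=\lambda(v-1)$ yields $k=1+m(v-1)/\bigl(af(q)\bigr)$; feeding this back into $k\mid 2ag(q)$ gives $m(v-1)+af(q)\mid 2a^{2}f(q)g(q)$, hence $v-1<2a^{2}f(q)g(q)$. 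The left side has order $q^{14}$ and the right side order $a^{2}q^{12}$, so no $q$ survives. The point you are missing is to use the small $\gcd(v-1,|H_0|)$ to bound $k$ from \emph{above} by $\lambda$ times something tiny, rather than to bound $\lambda$ from below; a single inequality then finishes the case with no subgroup analysis of $\PSp_4(q)$ and no residual \textsf{GAP} check.
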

\begin{proof}
In this case, by~\eqref{eq:v}, we have that $v=q^6(q^5+1)(q^3+1)$. It follows from \eqref{eq:k-out} that $k$ divides $2ag(q)$, where $g(q)=q^4(q^4-1)(q^2-1)$. Moreover, Lemma~\ref{lem:six}(a) implies that $k$ divides $\lambda(v-1)$. Let $f(q)=3\cdot(q-1)^2$. Then $\gcd(v-1,2 \cdot q^4(q^4-1)(q^2-1))$ divides $f(q)$, and so $k$ is a divisor of $\lambda a f(q)$. Suppose that $m$ is a positive integer such that $mk=\lambda a f(q)$. Since now $k(k-1)=\lambda(v-1)$, it follows that  $k=1+m(v-1)/af(q)$, and since $k\mid 2ag(q)$, we must have $m(v-1)+a f(q)\mid 2a^{2}f(q)g(q)$. Therefore, $q^6(q^5+1)(q^3+1) <2a^{2}f(q)g(q)$ for $q$ odd, and this does not give rise to any possible parameters.
\end{proof}

\begin{lemma}\label{lem:9-11}
The subgroup $H_{0}$ cannot be isomorphic to the subgroups as in the lines {\rm 9-11} of {\rm Table~\ref{tbl:maxes}}.
\end{lemma}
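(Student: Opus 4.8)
The plan is to treat the three lines uniformly, exploiting the fact that in each case $H_0$ has order bounded by an absolute constant, namely $|5_{+}^{1+2}{:}\Sp_{2}(5)|=15000$, $|\PSL_{2}(11)|=660$ and $|\PSU_{4}(2)|=25920$, while the congruence conditions recorded in Table~\ref{tbl:maxes} force $q=p$ (lines 10 and 11) or $q\in\{p,p^{2}\}$ (line 9), so that $a\leq 2$ throughout. First I would write down $v$ explicitly from \eqref{eq:v} in each line; since the numerator $q^{10}(q^{5}+1)(q^{4}-1)(q^{3}+1)(q^{2}-1)$ has degree $24$ in $q$ and $|H_{0}|$ is a fixed integer, $v$ grows like $q^{24}$.

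Next I would combine the size bounds. By \eqref{eq:k-out} we have $k\mid 2a\gcd(5,q+1)|H_{0}|$, whence $k\leq 10a|H_{0}|$, and Lemma~\ref{lem:six}(c) gives $\lambda v<k^{2}$. As $\lambda\geq 1$, this forces
\[
v\ \leq\ \lambda v\ <\ k^{2}\ \leq\ 100\,a^{2}|H_{0}|^{2}\ \leq\ 100\cdot 4\cdot 25920^{2}.
\]
Since the right-hand side is an absolute constant whereas $v$ grows like $q^{24}$, this inequality fails for all but finitely many $q$. Solving it explicitly in each of the three cases, using the actual value of $|H_{0}|$ and $a\leq 2$, leaves only a short list of small prime powers $q$, which must moreover satisfy the congruence restrictions of Table~\ref{tbl:maxes}; in fact the bound is so crude that essentially no value of $q$ survives.

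Finally, for each remaining $q$ I would test the candidate parameters directly. Here $v$ is determined, $k$ ranges over the divisors of $2a\gcd(5,q+1)|H_{0}|$ subject to $2<k<v-1$, and a genuine symmetric design requires $k(k-1)=\lambda(v-1)$ (so that $(v-1)\mid k(k-1)$ with $\lambda$ integral) together with the perfect-square condition $4\lambda(v-1)+1=\square$ of Lemma~\ref{lem:six}(b). This is a finite computation, readily carried out in \GAP, and it eliminates every surviving case, completing the proof.

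The only genuine difficulty here is bookkeeping rather than structure: one must ensure that the crude bound $v<100a^{2}|H_{0}|^{2}$ is intersected correctly with the congruence conditions so that no candidate $q$ is overlooked, and then verify that none of the few admissible values of $q$ simultaneously yields an integral $\lambda$ and a perfect square $4\lambda(v-1)+1$. Conceptually the conclusion is forced, since the point-stabilisers in lines 9--11 are far too small to act flag-transitively on the roughly $q^{24}$ points of $\Dmc$.
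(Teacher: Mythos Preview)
Your proposal is correct and follows essentially the same approach as the paper. The paper writes the crude bound as $|X|\leq |\Out(X)|^{2}|H_{0}|^{3}$, which is exactly your inequality $v<100a^{2}|H_{0}|^{2}$ multiplied through by $|H_{0}|$; intersecting with the congruence conditions it is left with the three pairs $(q,H_{0})=(2,\PSL_{2}(11))$, $(4,\,^{\hat{}}5_{+}^{1+2}{:}\Sp_{2}(5))$ and $(9,\,^{\hat{}}5_{+}^{1+2}{:}\Sp_{2}(5))$, and then verifies that $k(k-1)/(v-1)$ is never an integer for $k$ dividing $2a\gcd(5,q+1)|H_{0}|$. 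So your phrase ``essentially no value of $q$ survives'' is a slight understatement---three cases do survive the crude bound---but the finite check you outline is precisely what the paper carries out.
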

\begin{proof}
Let $H_{0}$ be isomorphic to one of the subgroups  in the lines {\rm 9-11} of {\rm Table~\ref{tbl:maxes}}. Since $|X|\leq |\Out(X)|^{2}\cdot|H\cap X|^{3}$, we only need to consider the pairs $(X, H\cap X)$  in Table~\ref{tbl:l9-10}. For each such $H\cap X$, by \eqref{eq:v}, we obtain $v$ as in the third column of Table~\ref{tbl:l9-10}. Recall that $k$ is a divisor of $2a\cdot\gcd(5, q+1)\cdot|H\cap X|$ which is recorded in the fourth column of Table~\ref{tbl:l9-10}. This is a contradiction as for each $k$ and $v$ as in Table~\ref{tbl:l9-10}, the fraction $k(k-1)/(v-1)$ is not integer.
\begin{table}
  \centering
  \small
  \caption{The pairs $(X, H\cap X)$ in Lemma \ref{lem:9-11}}\label{tbl:l9-10}
  \begin{tabular}{llll}
  \hline
  \multicolumn{1}{c}{$X$} &
  \multicolumn{1}{l}{$H\cap X$} &
   \multicolumn{1}{l}{$v$} &
   \multicolumn{1}{l}{$k$ divides}\\
  \hline
  $\PSU_5(2)$ & $\PSL_2(11)$ & $20736$ & $1320$\\
  $\PSU_5(4)$ & $^{\hat{}} 5_{+}^{1+2}:\Sp_{2}(5)$ & $3562930176$ & $60000$ \\
  $\PSU_5(9)$ & $^{\hat{}} 5_{+}^{1+2}:\Sp_{2}(5)$ & $1051720694280527616$ & $60000$\\
  \hline
\end{tabular}
\end{table}
\end{proof}

\begin{proof}[\rm \textbf{Proof of Theorem~\ref{thm:main}}]
The proof of the main result follows immediately from Lemmas \ref{lem:1}--\ref{lem:9-11}.
\end{proof}





\end{document}